\setlist{noitemsep}
\theoremstyle{plain}                    
\newtheorem{teo}{Theorem}[section]
\newtheorem{prop}[teo]{Proposition}    
\newtheorem{cor}[teo]{Corollary}       
\newtheorem{lem}[teo]{Lemma} 
\theoremstyle{definition}               
\newtheorem{defin}{Definition}
\theoremstyle{remark}                   
\title{Rational cohomology of the moduli space of trigonal curves of genus 5}
\author{Angelina Zheng}
\date{}
\begin{document}
	\maketitle
	\begin{abstract}
		We compute the rational cohomology of the moduli space of trigonal curves of genus 5. We do this by considering their natural embedding in the first Hirzebruch surface and by using Gorinov-Vassiliev's method.
		
	\end{abstract}
	\section{Introduction}
	The aim of this work is to compute the cohomology with rational coefficients of the moduli space of trigonal curves of genus 5. We will consider algebraic curves defined over the complex field $\mathbf{C}.$\\
	An algebraic curve is said to be trigonal if it is not hyperelliptic and it admits a $g^1_3.$ \\
	We define the moduli space $\mathcal{T}_g$
	as the locus of trigonal curves in $\mathcal{M}_g,$ the moduli space of smooth curves of genus $g.$
	For $g=3,4$ the rational cohomology of $\mathcal{T}_g$ can be then deduced from that of $\mathcal{M}_g$, which was computed by Looijenga in \cite{Loo} for $g=3,$ and by Tommasi in \cite{Tom} for $g=4.$ In fact, in these two cases $\mathcal{T}_g$ coincides with $\mathcal{M}_g\backslash\mathcal{H}_g,$ where $\mathcal{H}_g$ is the moduli space of smooth hyperelliptic curves of genus $g$: for $g=3$ any non-hyperelliptic curve admits infinitely many pencils of degree 3, while, for $g=4,$ any non-hyperelliptic curve admits either one or two of them. On the other hand, when $g\geq5$ a non-hyperelliptic curve is not necessarily trigonal and, in particular, $\mathcal{T}_5$ represents the first case where its cohomology cannot be automatically deduced from that of $\mathcal{H}_5$ and $\mathcal{M}_5.$ In fact, $\mathcal{M}_5$ can be decomposed into the disjoint union of the moduli spaces of hyperelliptic curves $\mathcal{H}_5$, of trigonal curves $\mathcal{T}_5,$ and the one parametrizing curves that are the complete intersection of three linearly independent smooth quadric hypersurfaces, which will be denoted by $\mathcal{Q}_5$. Therefore the rational cohomology of $\mathcal{T}_5$ represents an advance not only in the understanding of that of $\mathcal{M}_5,$ which is unknown, but hopefully also of the cohomology of $\mathcal{T}_g,$ for any $g\geq 5$.\\
	What is known about $\mathcal{T}_g\cup\mathcal{H}_g$ until now is due to Stankova, who computed in \cite{Sta} the rational Picard group of its closure $\overline{\mathcal{T}}_g\subseteq\overline{\mathcal{M}}_g,$ i.e. the compactification of $\mathcal{T}_g\cup\mathcal{H}_g$ by admissible covers; and to Bolognesi and Vistoli, who computed in \cite{bolognesi2012stacks} the integral Picard group of $\mathcal{T}_g\cup\mathcal{H}_g$. Later, Patel and Vakil computed its whole rational Chow ring $A_{\mathbf{Q}}^*(\mathcal{T}_g\cup\mathcal{H}_g)$, \cite{PV}.

	 More recently, for $g=5,$ Wennink, in \cite{Wen}, counted the number of points of $\mathcal{T}_5$ over a finite field $\mathbf{F}_q$ with $q$ points:
	
	$$|\mathcal{T}_5(\mathbf{F}_q)|=q^{11}+q^{10}-q^8+1.$$
	By standard comparison theorems, this determines the Euler characteristic of $\mathcal{T}_5$ in $K_0(\mathsf{HS}_{\mathbf{Q}})$, the Grothendieck group of rational Hodge structures. We will refine Wennink's result and compute the rational cohomology of $\mathcal{T}_5$ with its mixed Hodge structures.
	\begin{teo}
	The rational cohomology of $\mathcal{T}_5$ is 
	$$H^{i}(\mathcal{T}_5;\mathbf{Q})=\begin{cases}
	\mathbf{Q}, & i=0;\\
	\mathbf{Q}(-1), & i=2;\\
	\mathbf{Q}(-3), & i=5;\\
	\mathbf{Q}(-11), & i=12;\\
	0, & \text{otherwise};
	\end{cases}$$
	where $\mathbf{Q}(-k)$ denotes the Hodge structure of Tate of weight $2k.$
	\end{teo}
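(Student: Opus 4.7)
The plan is to embed smooth genus-$5$ trigonal curves in the first Hirzebruch surface $\mathbb{F}_1$, compute the rational cohomology of the resulting parameter space via the Gorinov--Vassiliev method, and then pass to the quotient by $\mathrm{Aut}(\mathbb{F}_1)$.

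\textbf{Parameter space.} A generic genus $5$ trigonal curve has Maroni invariant $1$, hence embeds via its $g^1_3$ as a smooth divisor of class $3C_0+5F$ in $\mathbb{F}_1$, the bidegree being forced by the genus formula. Set $V:=H^0(\mathbb{F}_1,\mathcal{O}(3C_0+5F))$ (of dimension $18$), let $X_{\mathrm{sm}}\subset\mathbb{P}(V)$ be the open locus of smooth divisors, and let $G:=\mathrm{Aut}(\mathbb{F}_1)$ be the $6$-dimensional connected algebraic group acting naturally. Up to the strictly lower-dimensional locus of trigonal curves with larger Maroni invariant (which must be shown not to contribute), $\mathcal{T}_5\cong X_{\mathrm{sm}}/G$, consistent with $\dim\mathcal{T}_5=17-6=11$.

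\textbf{Gorinov--Vassiliev on the discriminant.} Alexander duality in $\mathbb{P}(V)$ rewrites $\tilde H^k(X_{\mathrm{sm}};\mathbf{Q})$ in terms of the Borel--Moore homology of the discriminant $\Sigma:=\mathbb{P}(V)\setminus X_{\mathrm{sm}}$. Following Gorinov--Vassiliev, one builds a geometric simplicial resolution $|\Sigma|\to\Sigma$ stratified by the combinatorial type of the singular locus of the corresponding curve: a single node, cusp, or higher unibranch singularity; pairs or triples of singular points in general or special position; non-reduced or reducible configurations, in particular those containing a full fiber of $\mathbb{F}_1\to\mathbb{P}^1$ or the negative section $C_0$; and so on. Each stratum of $|\Sigma|$ is a bundle over a configuration space in $\mathbb{F}_1$ with simplicial fiber; the associated spectral sequence, with careful bookkeeping of orientation local systems and Tate twists, computes $\bar H_\ast(\Sigma;\mathbf{Q})$ and hence $H^\ast(X_{\mathrm{sm}};\mathbf{Q})$ as a mixed Hodge structure.

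\textbf{Quotient, consistency, and main obstacle.} Since $G$ is connected and acts on $X_{\mathrm{sm}}$ with at worst finite stabilizers (the automorphism groups of the curves), the map $X_{\mathrm{sm}}\to X_{\mathrm{sm}}/G$ is rationally a principal $G$-bundle, yielding a K\"unneth-type factorization $H^\ast(X_{\mathrm{sm}};\mathbf{Q})\cong H^\ast(\mathcal{T}_5;\mathbf{Q})\otimes H^\ast(G;\mathbf{Q})$ with $H^\ast(G;\mathbf{Q})$ an exterior algebra on odd generators; the cohomology of $\mathcal{T}_5$ is read off this factorization. By Poincar\'e duality, the compactly supported Hodge--Euler characteristic extracted from the stated answer equals $1-\mathbf{L}^8+\mathbf{L}^{10}+\mathbf{L}^{11}$ (writing $\mathbf{L}:=[\mathbf{Q}(-1)]$), matching Wennink's count $|\mathcal{T}_5(\mathbf{F}_q)|=q^{11}+q^{10}-q^8+1$. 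The hard part of the whole argument is the second step: exhaustively classifying the singularity configurations of $(3,5)$-divisors on $\mathbb{F}_1$, assembling the stratification spectral sequence with the correct signs, local systems, and Tate twists, and verifying that the many potential contributions collapse to the four classes predicted by the theorem.
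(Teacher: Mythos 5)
Your overall strategy (embed in $\mathbb{F}_1$ as divisors of class $3E+5F$, run Gorinov--Vassiliev on the discriminant, then quotient by the automorphisms) is the same as the paper's, but the quotient step as you state it contains a genuine error. You assert that because $G$ acts with finite stabilizers, $X_{\mathrm{sm}}\to X_{\mathrm{sm}}/G$ is rationally a principal bundle and hence $H^{\ast}(X_{\mathrm{sm}};\mathbf{Q})\cong H^{\ast}(\mathcal{T}_5;\mathbf{Q})\otimes H^{\ast}(G;\mathbf{Q})$. A principal $G$-bundle does \emph{not} automatically give such a K\"unneth factorization (the Hopf fibration $\mathbf{C}^{\ast}\to\mathbf{C}^2\setminus\{0\}\to\mathbf{P}^1$ already fails it); one needs the Leray--Hirsch hypothesis that the orbit map is surjective on cohomology. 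Moreover $\mathrm{Aut}(\mathbb{F}_1)$ is not reductive (it has a unipotent radical), so one cannot even form the GIT quotient by the full group. The paper therefore passes to the reductive part $\mathbf{C}^{\ast}\times GL(2,\mathbf{C})$, proves the Leray--Hirsch hypothesis only for the $GL(2,\mathbf{C})$-factor by an explicit computation showing that $[\Sigma]$, $[\Sigma_1]$, $[\Sigma_2]$ hit the generators of $\bar{H}_{\bullet}(D;\mathbf{Q})$ for the discriminant $D$ of $GL(2,\mathbf{C})$, and then treats the residual $\mathbf{C}^{\ast}$-bundle by its Leray spectral sequence. In that spectral sequence the differential $d_2^{0,1}$ is provably non-zero: the class $\mathbf{Q}(-1)$ in bidegree $(0,1)$ kills a class in bidegree $(2,0)$. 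So the full tensor factorization you assert is actually \emph{false} in this example, and reading $H^{\ast}(\mathcal{T}_5)$ off it would produce spurious classes (an extra weight-$2$ class in degree $1$ and its partner). This is not a presentational shortcut but the step where the answer is actually decided.

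Two smaller points. First, for $g=5$ the Maroni invariant must satisfy $n\equiv 5\bmod 2$ and $0\le n\le 7/3$, so $n=1$ is forced: \emph{every} trigonal curve of genus $5$ lies on $\mathbb{F}_1$, and there is no higher-Maroni locus to discard. Your hedge ``up to the strictly lower-dimensional locus\dots which must be shown not to contribute'' is both unnecessary and methodologically unsound --- lower-dimensional loci cannot be ignored when computing cohomology (as opposed to point counts). Second, the part you defer as ``the hard part'' --- the exhaustive list of singular configurations on $\mathbb{F}_1$, the identification of the handful with non-trivial twisted Borel--Moore homology (the paper's columns (A)--(H), (I)+(J), (L), (M)), and the analysis of the resulting spectral sequence --- is where essentially all of the paper's work lies, including delicate local-system computations (e.g.\ the $D_4$-representation theory in column (L)); a proof that stops at describing this step is not yet a proof.
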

		
	The whole rational cohomology of $\mathcal{T}_5$ can also be expressed in terms of its Hodge-Grothendieck polynomial, defined as
	\begin{equation}
		P(T_{\bullet};\mathbf{Q}):=\sum_{i\in\mathbf{Z}}\left[T_i\right]t^i\in K_0(\mathsf{HS}_{\mathbf{Q}})\left[t\right],\label{HG}
	\end{equation} 
	for any $\mathbf{Q}\mbox{-}$graded vector space of mixed Hodge structures $T_{\bullet}$.\\ 
	By Theorem 1.1, then 
	$$P(\mathcal{T}_5;\mathbf{Q})=\mathbf{L}^{11}t^{12}+\mathbf{L}^3t^5+\mathbf{L}t^2+1,$$
	where $\mathbf{L}$ denotes the class of the Tate Hodge structure $\mathbf{Q}(-1).$\\
	Moreover, since the moduli space $\mathcal{H}_g,$ $g\geq2,$ has always the rational cohomology of a point, we can also prove the following 
	\begin{cor}
	The rational cohomology of ${\mathcal{T}}_5\cup\mathcal{H}_5$ is 
	$$H^{i}({\mathcal{T}}_5\cup\mathcal{H}_5;\mathbf{Q})=\begin{cases}
	\mathbf{Q}, & i=0;\\
	\mathbf{Q}(-1), & i=2;\\
	\mathbf{Q}(-2), & i=4;\\ 
	\mathbf{Q}(-3), & i=5;\\
	\mathbf{Q}(-11), & i=12;\\
	0, & \text{otherwise}.
	\end{cases}$$
	\end{cor}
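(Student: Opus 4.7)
The plan is to exploit the stratification $\mathcal{T}_5 \cup \mathcal{H}_5 = \mathcal{T}_5 \sqcup \mathcal{H}_5$, in which $\mathcal{T}_5$ is open and $\mathcal{H}_5$ is closed of codimension $\dim \mathcal{T}_5 - \dim \mathcal{H}_5 = 11 - 9 = 2$. First I would invoke the Gysin long exact sequence in rational cohomology associated with this smooth closed inclusion,
\begin{equation*}
\cdots \to H^{i-4}(\mathcal{H}_5;\mathbf{Q})(-2) \to H^i(\mathcal{T}_5 \cup \mathcal{H}_5;\mathbf{Q}) \to H^i(\mathcal{T}_5;\mathbf{Q}) \to H^{i-3}(\mathcal{H}_5;\mathbf{Q})(-2) \to \cdots,
\end{equation*}
so that the two input data $H^{\ast}(\mathcal{T}_5;\mathbf{Q})$ (Theorem 1.1) and $H^{\ast}(\mathcal{H}_5;\mathbf{Q}) = H^{\ast}(\mathrm{pt};\mathbf{Q})$ feed directly into it.

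The combinatorics are then very constrained: since $H^k(\mathcal{H}_5;\mathbf{Q})$ vanishes for $k>0$, only the two adjacent positions $i=3$ and $i=4$ can receive a nonzero contribution from the Gysin terms, equal to $\mathbf{Q}(-2)$; in every other degree the sequence collapses to an isomorphism $H^i(\mathcal{T}_5 \cup \mathcal{H}_5;\mathbf{Q}) \cong H^i(\mathcal{T}_5;\mathbf{Q})$, which by Theorem 1.1 reproduces the groups in degrees $0, 2, 5, 12$. For the remaining window $i=3,4$, using $H^3(\mathcal{T}_5;\mathbf{Q}) = H^4(\mathcal{T}_5;\mathbf{Q}) = 0$, the sequence reduces to
\begin{equation*}
0 \to H^3(\mathcal{T}_5 \cup \mathcal{H}_5;\mathbf{Q}) \to 0 \to \mathbf{Q}(-2) \to H^4(\mathcal{T}_5 \cup \mathcal{H}_5;\mathbf{Q}) \to 0,
\end{equation*}
whence $H^3 = 0$ and $H^4 = \mathbf{Q}(-2)$ at once.

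The only nontrivial hypothesis to verify is the validity of the Gysin sequence in this stack-theoretic setting, namely that $\mathcal{T}_5 \cup \mathcal{H}_5$ is a smooth Deligne--Mumford stack in which $\mathcal{H}_5$ sits as a smooth closed substack of codimension $2$; this is where I expect the main (essentially the only) bookkeeping obstacle, and it can be extracted from Bolognesi--Vistoli's description of $\mathcal{T}_g \cup \mathcal{H}_g$ as a smooth stack in \cite{bolognesi2012stacks}. If one prefers to avoid invoking smoothness of the union directly, an equivalent route is to use the open-closed long exact sequence in compactly supported cohomology, $\cdots \to H^i_c(\mathcal{T}_5;\mathbf{Q}) \to H^i_c(\mathcal{T}_5 \cup \mathcal{H}_5;\mathbf{Q}) \to H^i_c(\mathcal{H}_5;\mathbf{Q}) \to \cdots$, apply Poincar\'e duality on the two smooth strata $\mathcal{T}_5$ and $\mathcal{H}_5$ separately, and translate back to ordinary cohomology at the end.
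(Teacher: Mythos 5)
Your argument is essentially the paper's own: the paper proves this corollary in one line, by combining Theorem 1.1 with the fact that $\mathcal{H}_5$ has the rational cohomology of a point, the implicit mechanism being exactly the Gysin sequence for the codimension-two closed stratum $\mathcal{H}_5\hookrightarrow\mathcal{T}_5\cup\mathcal{H}_5$ (equivalently, the Borel--Moore long exact sequence plus Poincar\'e duality), and your degree bookkeeping correctly produces the single new class $\mathbf{Q}(-2)$ in degree $4$. The smoothness/duality caveat you flag for the union is real but is likewise left implicit in the paper, so your write-up is if anything more careful than the source.
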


	Theorem 1.1 and Corollary 1.2 are consistent with the known results about their cohomology. In particular, the maximal weight class can be identified with the top weight cohomology class of $\mathcal{M}_5,$ described by Chan, Galatius and Payne in \cite{chan2018tropical} and \cite{chan2019topology}. They proved indeed that the cohomology $H^{4g-6}(\mathcal{M}_g;\mathbf{Q})$ is non zero for g=5 and that, by studying the dual complex of the boundary divisor in $\overline{\mathcal{M}}_g,$ the top graded piece on the cohomology of $\mathcal{M}_5$ is such that
	$$
	\text{dim}\text{Gr}^W_{6g-6}H^i(\mathcal{M}_5;\mathbf{Q})=
	\begin{cases}1,&i=14;\\
	0,&i\neq 14.
	\end{cases}
	$$
	The stratification of $\mathcal{M}_5$ as union of affine varieties
	$${\mathcal{T}}_5\cup\mathcal{H}_5\overset{\text{closed}}{\hookrightarrow}\mathcal{M}_5\overset{\text{open}}\hookleftarrow \mathcal{Q}_5$$
	induces a Gysin exact sequence in Borel-Moore homology
	$$\dots\rightarrow\bar{H}_k({\mathcal{T}}_5\cup\mathcal{H}_5;\mathbf{Q})\rightarrow \bar{H}_k({\mathcal{M}}_5;\mathbf{Q})\rightarrow \bar{H}_{k-1}({\mathcal{Q}}_5;\mathbf{Q})\rightarrow\dots$$
	Since $\mathcal{Q}_5$ is affine, $\bar{H}_{10}(\mathcal{M}_5;\mathbf{Q})=\bar{H}_{10}(\mathcal{T}_5;\mathbf{Q}),$ and Poincar\'e duality gives   $H^{14}(\mathcal{M}_5;\mathbf{Q})=H^{12}(\mathcal{T}_5;\mathbf{Q}).$\\ 
	
	We also notice that, for lower degrees, the rational cohomology of $\mathcal{T}_5\cup\mathcal{H}_5$ coincides with that of $\mathcal{M}_4,$ so one may wonder if this also happens for any $g\geq5$.\\

	The proof of Theorem 1.1 relies on Gorinov-Vassiliev's method. This method was first used by Vassiliev in \cite{Vart}, then modified by Gorinov in \cite{gorinov2005real} and finally by Tommasi in \cite{Tom}. We will use Tommasi's version of the method.\\
	The work is organized as follows. In Section 2 we recall some definitions and classical results. Then we present Gorinov-Vassiliev's method in Section 3 and we apply it in Section 4 in order to prove the main theorem.\\
	\\
	\textbf{Acknowledgements.} I am sincerely grateful to my advisor, Orsola Tommasi, for guiding me throughout this work and for her precious help.

	\section{Preliminaries}
	We recall that any trigonal curve of genus $g$ may be embedded in a Hirzebruch surface $\mathbb{F}_n$ as a divisor of class
	$$C\sim 3E+\frac{g+3n+2}{2}F,$$
	where $E$ is the exceptional divisor and $F$ is any fiber of the ruling $\mathbb{F}_n\rightarrow\mathbf{P}^1.$\\
	The integer $n$ is called the \textit{Maroni invariant} \cite{Mar} of $C$ and it has to satisfy
	$$g\equiv n \text{ mod } 2,\qquad\text{and}\qquad0\leq n\leq\frac{g+2}{3}.$$
	The moduli space of trigonal curves of genus $g$ has then a natural stratification given by the Maroni invariant:
	$$\mathcal{N}_{\lfloor{\frac{g+2}{3}}\rfloor}\subset\dots\subset\mathcal{N}_0=\mathcal{T}_g,$$
	where $\mathcal{N}_n$ is the closed subscheme of trigonal curves having Maroni invariant bigger or equal than $n.$\\
	Thus, for $g=5$, this stratification consists of only one stratum and any trigonal curve lies on the Hirzebruch surface $\mathbb{F}_1$, which is the blow up of the projective plane at one point, as an element of the linear system $|3E+5F|$.
	Moreover, one can show that there is a one-to-one correspondence between trigonal curves of genus 5 and projective plane quintics having exactly one ordinary node or cusp. In fact, given a trigonal curve $C$, and hence a $g^1_3$, one can show that the linear system $|K-D|,$ where $D\in g^1_3$ and $K$ is the canonical divisor, is a base point free $g_5^2.$ This defines a morphism $C\rightarrow \mathbf{P}^2$ such that the image has degree 5, with precisely one singularity of delta invariant 1. Conversely, a plane projective curve of degree 5 with one singularity that is a node or a cusp has arithmetic genus 5, and each line through the singular point meets the curve in other 3 points, counting multiplicity, defining a $g_3^1.$
	
	\subsection{Configuration spaces}
	We will work mostly with configuration spaces that we will define in this section. We also recall some results about their Borel-Moore homology with twisted coefficients that we will use repeatedly.
	\begin{defin}
		Let $Z$ be a topological space,
		$$F(Z,k):=Z^k\backslash\bigcup_{1\leq i<j\leq k}\{(z_1,\dots,z_k)\in Z^k|z_i=z_j\}$$
		is the space of ordered configurations of $k$ points in $Z.$ The quotient by the natural action of the symmetric group $\mathfrak{S}_k$ is denoted by $B(Z,k)$ and it is the space of unordered configurations of $k$ points in $Z.$\\
		For any subspace $Y\subseteq B(Z,k),$ the local system $\pm\mathbf{Q}$ over $Y$ is the one locally isomorphic to $\mathbf{Q},$ that changes its sign under any loop defining an odd permutation in a configuration from $Y.$ We will denote by $\bar{H}_{\bullet}(Y;\pm\mathbf{Q})$ the \emph{ Borel-Moore homology }of $Y$ with twisted coefficients, or the \emph{twisted Borel-Moore homology} of $Y,$ and by $\bar{P}(Y;\pm\mathbf{Q})$ its Hodge-Grothendieck polynomial, defined as in \eqref{HG}.
	\end{defin}
	
	When $P\in\mathbf{P}^2$ and $Z=\mathbf{P}^2\backslash\{P\},$ we will consider the subspaces $\tilde{F}(Z,k)\subseteq F(Z,k)$ and $\tilde{B}(Z,k)\subseteq B(Z,k)$ consisting of \emph{generic} configurations, i.e. configurations of points that are in general position, such that no two points lie on the same line through $P$.\\

	We also list some Lemmas that will be fundamental in our next computations.
	\begin{lem}[\cite{Vart}]\begin {itemize}
		\item[a.] $\bar{H}_{\bullet}(B(\mathbf{C}^N,k);\pm\mathbf{Q})$ is trivial for any $N\geq1$, $k\geq2.$
		\item[b.] $\bar{H}_{\bullet}(B(\mathbf{P}^N,k);\pm\mathbf{Q})=H_{\bullet-k(k-1)}(G(k,\mathbf{C}^{N+1});\mathbf{Q})$, where $G(k,\mathbf{C}^{N+1})$ is the Grassmann manifold of $k\mbox{-}$dimensional subspaces in $\mathbf{C}^{N+1}$. In particular the group $\bar{H}_{\bullet}(B(\mathbf{P}^N,k);\pm\mathbf{Q})$ is trivial if $k>N+1.$
	\end{itemize}
\end{lem}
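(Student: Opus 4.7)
Both parts reduce, via transfer along the \'etale $\mathfrak{S}_k$-cover $F(Z,k)\to B(Z,k)$, to computing the sign-isotypic component of $\bar H_\bullet(F(Z,k);\mathbf{Q})$ as a rational $\mathfrak{S}_k$-representation. My plan is, in each case, to set up a localization/Leray-type exact sequence that brings the symmetric-group action into the foreground, and then to use representation-theoretic input---essentially Frobenius reciprocity for (a) and the Weyl-group action on Schubert classes for (b)---to extract the sign part.

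For part (a), I would use the localization sequence for the open embedding $F(\mathbf{C}^N,k)\hookrightarrow(\mathbf{C}^N)^k$ with closed complement the fat diagonal $\Delta$:
\[
\cdots\to\bar H_\bullet(\Delta;\mathbf{Q})\to\bar H_\bullet\bigl((\mathbf{C}^N)^k;\mathbf{Q}\bigr)\to\bar H_\bullet\bigl(F(\mathbf{C}^N,k);\mathbf{Q}\bigr)\to\cdots.
\]
Over $\mathbf{Q}$ this splits into $\mathfrak{S}_k$-isotypic components. The Borel--Moore homology of $(\mathbf{C}^N)^k$ is one-dimensional in top degree, and since $\mathfrak{S}_k$ acts on it through complex-linear permutations of the factors (hence orientation-preserving), its sign-isotypic part already vanishes. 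The fat diagonal stratifies by set-partitions $\lambda\vdash k$ with $\ell(\lambda)<k$; each open stratum $\Delta_\lambda^\circ\cong F(\mathbf{C}^N,\ell(\lambda))$ has its stabilizer $\mathrm{stab}(\lambda)\subset\mathfrak{S}_k$ acting through the natural quotient to $\mathfrak{S}_{\ell(\lambda)}$, with kernel $\prod_i\mathfrak{S}_{\mu_i}$. Since some $\mu_i\geq 2$, the sign character of $\mathfrak{S}_k$ restricts nontrivially to this kernel, which acts trivially on the stratum; by Frobenius reciprocity the sign-isotypic contribution of each stratum vanishes, and assembling the spectral sequence gives $\bar H_\bullet(\Delta;\mathbf{Q})^{\mathrm{sgn}}=0$, whence the claim.

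For part (b), the main geometric tool is the \emph{span map}
\[
\sigma\colon B(\mathbf{P}^N,k)\dashrightarrow G(k,\mathbf{C}^{N+1}),\qquad\{[v_1],\ldots,[v_k]\}\longmapsto\langle v_1,\ldots,v_k\rangle,
\]
defined on the open stratum $B(\mathbf{P}^N,k)^{\mathrm{gen}}$ of configurations whose lifts are linearly independent. I would stratify $B(\mathbf{P}^N,k)$ by the dimension $j\in\{1,\ldots,k\}$ of the span. For $j<k$ each stratum is a bundle over $G(j,\mathbf{C}^{N+1})$ with fiber $B(\mathbf{P}^{j-1},k)^{\mathrm{gen}}$; since $k>j$ we have ``too many points,'' and part (a) applied in a generic affine chart $\mathbf{P}^{j-1}\setminus H$ together with an induction on $k-j$ shows that the twisted Borel--Moore homology of the fiber, hence of the stratum, vanishes. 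Consequently $\bar H_\bullet(B(\mathbf{P}^N,k);\pm\mathbf{Q})\cong\bar H_\bullet(B(\mathbf{P}^N,k)^{\mathrm{gen}};\pm\mathbf{Q})$, and on the top stratum $\sigma$ is a Zariski-locally trivial fibration over the simply connected base $G(k,\mathbf{C}^{N+1})$ with fiber $B(\mathbf{P}^{k-1},k)^{\mathrm{gen}}$; the Leray spectral sequence then degenerates and produces the shift by $k(k-1)$.

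The main obstacle is the fiber computation $\bar H_\bullet(B(\mathbf{P}^{k-1},k)^{\mathrm{gen}};\pm\mathbf{Q})\cong\mathbf{Q}$ concentrated in Borel--Moore degree $k(k-1)$. Here I would use that $\mathrm{PGL}_k(\mathbf{C})$ acts transitively on ordered projective frames to identify the ordered top stratum with $\mathrm{PGL}_k/T_{\mathrm{ad}}$, which has the same rational cohomology as the complete flag variety of $\mathrm{PGL}_k$. By Borel's theorem this cohomology realizes the regular representation of the Weyl group $W=\mathfrak{S}_k$, and the sign character appears in it exactly once, namely in top cohomological degree $2\binom{k}{2}$. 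Poincar\'e--Lefschetz duality on the smooth noncompact top stratum (of complex dimension $k(k-1)$) then converts this into the required Borel--Moore class in degree $k(k-1)$.
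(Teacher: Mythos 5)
Your argument is essentially correct, but note that the paper does not prove this lemma at all: it is quoted from Vassiliev [Vas99] (and is also used in Gorinov's and Tommasi's papers in the same form), so there is no internal proof to compare against. What you have written is a self-contained reconstruction along the standard lines. The overall architecture is sound: the transfer reduction $\bar H_\bullet(B(Z,k);\pm\mathbf{Q})\cong\bar H_\bullet(F(Z,k);\mathbf{Q})^{\mathrm{sgn}}$ is exactly the paper's Lemma 2.7 applied to the sign summand of $p_*\mathbf{Q}$; in (a) the Frobenius-reciprocity argument on the strata of the fat diagonal (a block of size $\geq 2$ acts trivially on the stratum but nontrivially under sign) is correct and exactness of the sign-isotypic projector over $\mathbf{Q}$ lets you assemble the strata; in (b) the stratification by the dimension of the span, the vanishing of the lower strata, and the identification of the ordered generic stratum with $\mathrm{PGL}_k/T\simeq\mathrm{Fl}_k$ carrying the regular representation of $W=\mathfrak{S}_k$ on cohomology (sign occurring once, in top degree $k(k-1)$, on the image of the Vandermonde) all check out, including the degree bookkeeping after Poincar\'e duality.

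Two places deserve tightening. First, the vanishing of the lower strata is doing double duty: you need both the statement $\bar H_\bullet(B(\mathbf{P}^m,k);\pm\mathbf{Q})=0$ for $k>m+1$ (proved by cutting along a hyperplane, $B(\mathbf{P}^m,k)=\bigsqcup_{a+b=k}B(\mathbf{P}^{m-1},a)\times B(\mathbf{C}^m,b)$, applying part (a) to kill all terms with $b\geq 2$, and inducting on $m$) and, separately, the vanishing of the \emph{spanning} stratum $B(\mathbf{P}^{j-1},k)^{\mathrm{gen}}$ for $k>j$, which follows from the first statement by running the span stratification on $\mathbf{P}^{j-1}$ itself and inducting on $j$; your phrase ``induction on $k-j$'' conflates these, though the fix is routine. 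Second, two equivariance points should be stated explicitly: the $\mathfrak{S}_k$-action on $\bar H_{2Nk}((\mathbf{C}^N)^k)$ is trivial because permutation of complex factors is orientation-preserving, and Poincar\'e duality on the generic stratum is $\mathfrak{S}_k$-equivariant with no orientation twist for the same reason. With these points spelled out the proof is complete.
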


\begin{lem}[\cite{Tom}]
	The Hodge-Grothendieck polynomial of $\bar{H}_{\bullet}(B(\mathbf{C}^*,k);\pm\mathbf{Q})$ is $t^k+\mathbf{L}^{-1}t^{k+1}$ for any $k\geq1.$ If we consider the action of $\mathfrak{S}_2$ on $\mathbf{C}^*$ induced by $\tau\mapsto\frac{1}{\tau},$ we have that the Borel-Moore homology classes of even degree are invariant and those of odd degree are anti-invariant.
\end{lem}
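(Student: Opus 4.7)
The plan is to proceed by induction on $k$ using the long exact sequence in Borel--Moore homology associated to the open--closed decomposition $B(\mathbf{C}^*,k)\hookrightarrow B(\mathbf{C},k)\hookleftarrow Z$, where $Z$ is the closed subvariety of configurations containing $0$. A configuration of $k$ points in $\mathbf{C}$ containing $0$ is determined by its remaining $k-1$ points in $\mathbf{C}^*$, so $Z\cong B(\mathbf{C}^*,k-1)$, and because $0$ is fixed throughout any loop in $Z$ this identification respects the local system $\pm\mathbf{Q}$. By Lemma 2.2.a the middle term $\bar{H}_\bullet(B(\mathbf{C},k);\pm\mathbf{Q})$ vanishes for $k\geq 2$, so the long exact sequence reduces to an isomorphism of mixed Hodge structures $\bar{H}_i(B(\mathbf{C}^*,k);\pm\mathbf{Q})\cong \bar{H}_{i-1}(B(\mathbf{C}^*,k-1);\pm\mathbf{Q})$, i.e.\ multiplication by $t$ at the level of Hodge--Grothendieck polynomials. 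The base case is $B(\mathbf{C}^*,1)=\mathbf{C}^*$ with $\bar{H}_1(\mathbf{C}^*)=\mathbf{Q}$ and $\bar{H}_2(\mathbf{C}^*)=\mathbf{L}^{-1}$ (the fundamental class of a smooth complex curve), giving $\bar{P}(\mathbf{C}^*;\mathbf{Q})=t+\mathbf{L}^{-1}t^2$; iterating produces the stated polynomial.

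For the $\mathfrak{S}_2$-action $\sigma\colon\tau\mapsto 1/\tau$ the previous sequence is no longer directly useful, because $\sigma$ does not extend to $\mathbf{C}$. I would first settle the base case by hand: $\sigma$ is a holomorphic, hence orientation-preserving, automorphism of $\mathbf{C}^*$, so it fixes the fundamental class in $\bar{H}_2(\mathbf{C}^*)=\mathbf{L}^{-1}$ (even degree, invariant); while on $\bar{H}_1(\mathbf{C}^*)=\mathbf{Q}$ the generator is Poincar\'e dual to $d\tau/\tau$ and $\sigma^*(d\tau/\tau)=-d\tau/\tau$, so $\sigma$ acts by $-1$ (odd degree, anti-invariant). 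One can see the same thing directly in Borel--Moore by noting that the positive real axis represents $\bar{H}_1(\mathbf{C}^*)$ and is preserved setwise by $\sigma$ with reversed orientation. To propagate this to arbitrary $k$, I would replace the decomposition of the first paragraph by the $\sigma$-equivariant one $B(\mathbf{C}^*,k)\hookrightarrow B(\mathbf{P}^1,k)\hookleftarrow \Sigma$, where $\sigma$ extends to $\mathbf{P}^1$ swapping $0$ and $\infty$ and $\Sigma$ is the closed subscheme of configurations meeting $\{0,\infty\}$; $\Sigma$ is $\sigma$-equivariantly stratified as two copies of $B(\mathbf{C}^*,k-1)$ swapped by $\sigma$ (configurations containing exactly one of $0$, $\infty$) together with one copy of $B(\mathbf{C}^*,k-2)$ on which $\sigma$ acts by the one-step-smaller involution (configurations containing both).

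The main obstacle will be the bookkeeping in these equivariant long exact sequences. Comparing the polynomial obtained in the first paragraph with the na\"\i ve alternating sum of the Hodge--Grothendieck polynomials of the strata of $\Sigma$ shows that substantial cancellations must occur in the associated sequences; ruling out the extraneous contributions requires an analysis of the connecting homomorphisms and of the $\sigma$-module structure on $\bar{H}_\bullet(B(\mathbf{P}^1,k);\pm\mathbf{Q})$ provided by Lemma 2.2.b (which vanishes for $k\geq 3$ and is explicit for $k\leq 2$). Once the surviving Borel--Moore classes are matched with the $\sigma$-eigenspace decomposition inherited from the inductive hypothesis, the claimed parity pattern (even degree invariant, odd degree anti-invariant) drops out.
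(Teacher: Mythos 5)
The paper does not actually prove this lemma: it is stated with the citation [Tom] and imported from Tommasi's genus~4 paper, so there is no in-text argument to compare yours against. Judged on its own terms, your first paragraph is complete and correct: the open--closed decomposition $B(\mathbf{C}^*,k)\hookrightarrow B(\mathbf{C},k)\hookleftarrow Z$ with $Z\cong B(\mathbf{C}^*,k-1)$, the vanishing of the middle term (this is Lemma~2.1.a in the paper's numbering, not 2.2.a), the compatibility of $\pm\mathbf{Q}$ with the identification of $Z$ (the added point $0$ is fixed by all monodromy), and the base case $\bar{P}(\mathbf{C}^*;\mathbf{Q})=t+\mathbf{L}^{-1}t^2$ together give the stated polynomial.

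For the equivariant statement your plan is workable, and indeed for $k\geq 3$ the vanishing of $\bar{H}_{\bullet}(B(\mathbf{P}^1,k);\pm\mathbf{Q})$ forces the connecting maps by a dimension count, so the "bookkeeping" you defer really does close up. But there is one concrete point at which the sketch, as written, would produce the wrong parity: on the stratum $\Sigma_2\cong B(\mathbf{C}^*,k-2)$ of configurations containing both $0$ and $\infty$, the involution $\sigma$ does \emph{not} act on $\bar{H}_\bullet(\Sigma_2;\pm\mathbf{Q})$ simply as "the one-step-smaller involution": since $\sigma$ transposes the two distinguished points $0$ and $\infty$ inside each configuration, its action on the restricted local system $\pm\mathbf{Q}$ (whose stalk is the sign line of all $k$ points) carries an extra factor of $-1$ relative to the inductive action on $\bar{H}_\bullet(B(\mathbf{C}^*,k-2);\pm\mathbf{Q})$. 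Running your long exact sequences without this twist yields the opposite parity pattern (even anti-invariant, odd invariant), so the sign is not optional bookkeeping but the crux of the induction. You should also note that the induction step $k\mapsto k+2$ needs \emph{two} base cases, $k=1$ and $k=2$; for $k=2$ the middle term $\bar{H}_2(B(\mathbf{P}^1,2);\pm\mathbf{Q})=\mathbf{Q}(1)$ is nonzero, but a weight argument shows the relevant map to the weight-$0$ class of $B(\mathbf{C}^*,2)$ vanishes, and since $\sigma$ comes from the connected group $PGL_2$ it acts trivially on $\bar{H}_\bullet(B(\mathbf{P}^1,2);\pm\mathbf{Q})$, after which the $k=2$ case also comes out as claimed.
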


\begin{lem}
	The Hodge-Grothendieck polynomial of $\bar{H}_{\bullet}(B(\mathbf{P}^2\backslash\{P\},2);\pm\mathbf{Q})$ is $\mathbf{L}^{-3}t^6$.\\
	$\bar{H}_{\bullet}(B(\mathbf{P}^2\backslash\{P\},k);\pm\mathbf{Q})$ is trivial for $k\geq3$.
\end{lem}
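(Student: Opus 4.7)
The plan is to stratify $B(\mathbf{P}^2\setminus\{P\},k)$ using the linear projection $\pi\colon\mathbf{P}^2\setminus\{P\}\to\mathbf{P}^1$ with center $P$, which realizes $\mathbf{P}^2\setminus\{P\}$ as the total space of an $\mathbf{A}^1$-bundle over $\mathbf{P}^1$ (the complement of the exceptional section in $\mathbb{F}_1$). Sending an unordered configuration $\{q_1,\dots,q_k\}$ to the effective divisor $\sum_i\pi(q_i)$ on $\mathbf{P}^1$ yields a map
$$\Phi\colon B(\mathbf{P}^2\setminus\{P\},k)\longrightarrow\mathrm{Sym}^k\mathbf{P}^1=\mathbf{P}^k.$$
I would stratify $\mathbf{P}^k$ by partition type: for each partition $\lambda=(m_1,\dots,m_r)$ of $k$, let $S_\lambda\subset\mathbf{P}^k$ be the locally closed subset of divisors with multiplicity pattern $\lambda$, and set $Y_\lambda:=\Phi^{-1}(S_\lambda)$. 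Then $\Phi|_{Y_\lambda}\colon Y_\lambda\to S_\lambda$ is a locally trivial fibration whose fiber over $\sum_i m_i[p_i]$ is $\prod_i B(\pi^{-1}(p_i),m_i)\cong\prod_i B(\mathbf{C},m_i)$, and the local system $\pm\mathbf{Q}$ on $Y_\lambda$ restricts to the external tensor product of the analogous sign local systems on the factors.

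The key observation is that almost every stratum is acyclic in twisted Borel--Moore homology. If $\lambda$ has any part $m_i\geq 2$, then by Lemma 2.1(a) the twisted Borel--Moore homology of $B(\mathbf{C},m_i)$ vanishes, so by K\"unneth the fiber is acyclic and the Leray spectral sequence forces $\bar H_\bullet(Y_\lambda;\pm\mathbf{Q})=0$. Thus only $\lambda=(1^k)$ may contribute: here $Y_{(1^k)}$ is an $\mathbf{A}^k$-bundle over $S_{(1^k)}=B(\mathbf{P}^1,k)$, giving
$$\bar P(Y_{(1^k)};\pm\mathbf{Q})=\mathbf{L}^{-k}t^{2k}\cdot\bar P(B(\mathbf{P}^1,k);\pm\mathbf{Q}).$$
By Lemma 2.1(b), $\bar H_\bullet(B(\mathbf{P}^1,k);\pm\mathbf{Q})$ vanishes for $k\geq 3$ because $G(k,\mathbf{C}^2)=\emptyset$, which immediately yields the second half of the statement. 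For $k=2$, the same lemma produces a single class in degree $2$ coming from $G(2,\mathbf{C}^2)=\mathrm{pt}$; its mixed Hodge structure can be pinned down by working with the double cover $F(\mathbf{P}^1,2)=\mathbf{P}^1\times\mathbf{P}^1\setminus\Delta$, computing $H^\bullet$ via the Gysin sequence for $\Delta$, isolating the $\mathfrak{S}_2$-anti-invariant part and dualising, which gives $\bar P(B(\mathbf{P}^1,2);\pm\mathbf{Q})=\mathbf{L}^{-1}t^2$ and hence $\bar P(Y_{(1,1)};\pm\mathbf{Q})=\mathbf{L}^{-3}t^6$.

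Finally, the spectral sequence associated with the stratification $B(\mathbf{P}^2\setminus\{P\},k)=\bigsqcup_\lambda Y_\lambda$ has at most one non-zero column, so it degenerates and $\bar H_\bullet(B(\mathbf{P}^2\setminus\{P\},k);\pm\mathbf{Q})\cong\bar H_\bullet(Y_{(1^k)};\pm\mathbf{Q})$ in both cases, yielding the claimed $\mathbf{L}^{-3}t^6$ for $k=2$ and vanishing for $k\geq 3$. The main delicate point I anticipate is the Hodge-theoretic refinement: Lemma 2.1(b) as stated identifies only the underlying rational vector space, and the $\mathbf{L}^{-1}$ Tate twist on $\bar H_2(B(\mathbf{P}^1,2);\pm\mathbf{Q})$ must be verified by the explicit geometric computation sketched above.
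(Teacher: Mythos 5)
Your argument is correct, but it takes a genuinely different route from the paper. The paper does not use the projection $\pi\colon\mathbf{P}^2\setminus\{P\}\to\mathbf{P}^1$ at all: it simply writes $\mathbf{P}^2\setminus\{P\}$ as a disjoint union of cells $S_1\sqcup S_2\cong\mathbf{C}^2\sqcup\mathbf{C}$ (affine chart plus the punctured line at infinity through $P$), stratifies $B(\mathbf{P}^2\setminus\{P\},k)$ by the ordered partition $(a_1,a_2)$ recording how many points fall in each cell, and kills every stratum with some $a_i\geq 2$ by Lemma~2.1(a). For $k\geq3$ pigeonhole forces some $a_i\geq2$, and for $k=2$ the single surviving stratum $(1,1)$ is literally $\mathbf{C}^2\times\mathbf{C}\cong\mathbf{C}^3$, so the answer $\mathbf{Q}(3)$ in degree $6$, with its Hodge structure, is immediate. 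Your version stratifies instead by the collision pattern of the image divisor in $\mathrm{Sym}^k\mathbf{P}^1$, which needs both parts of Lemma~2.1 and, as you rightly flag, an extra step to pin down the Tate twist on $\bar H_2(B(\mathbf{P}^1,2);\pm\mathbf{Q})$, since Lemma~2.1(b) is stated only at the level of the underlying vector spaces; your Gysin computation for $\mathbf{P}^1\times\mathbf{P}^1\setminus\Delta$ does close that gap correctly. What your approach buys is compatibility with the ruling --- the same fibration underlies the definition of the generic configuration spaces $\tilde B$ used later --- whereas the paper's cell decomposition is shorter and delivers the mixed Hodge structure for free.
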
\begin{proof}$\mathbf{P}^2\backslash\{P\}$ can be decomposed into the disjoint union of spaces $S_1,S_2$, isomorphic respectively to $\mathbf{C}^2$ and $\mathbf{C}.$ Then, to any configuration of points in $B(\mathbf{P}^2\backslash\{P\},k)$ we can associate an ordered partition $(a_1,a_2)$, where $a_i$ is the number of points contained in $S_i.$ We consider each possible partition of $k$ as defining a stratum in $B(\mathbf{P}^2\backslash\{P\},k)$, and order each stata by lexicographic order of the index of partition. All strata with any $a_i\geq2$ have no twisted Borel-Moore homology by Lemma 2.1.$a$, so the second part of the Lemma is proved. When $k=2,$ the only admissible partition is $(1,1)$ that is a stratum isomorphic to $\mathbf{C}^3,$ hence it has twisted Borel-Moore homology $\mathbf{Q}(3)$ in degree 6 and trivial homology in all other degrees.
\end{proof}

\begin{lem}
	The Hodge-Grothendieck polynomial of $\bar{H}_{\bullet}(\tilde{F}(\mathbf{P}^2\backslash\{P\},2);\mathbf{Q})$ is 
	$\mathbf{L}^{-4}t^{8}+\mathbf{L}^{-3}t^{6}.$
\end{lem}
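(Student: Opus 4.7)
The plan is to reduce the computation to the ordered configuration space of two points on $\mathbf{P}^1$ via the linear projection from $P$. Let $\pi\colon \mathbf{P}^2\setminus\{P\}\to\mathbf{P}^1$ send a point $x$ to the line $\overline{Px}$; this exhibits $\mathbf{P}^2\setminus\{P\}$ as the total space of $\mathcal{O}_{\mathbf{P}^1}(1)$, hence as a Zariski-locally trivial $\mathbf{A}^1$-bundle over $\mathbf{P}^1$. A pair $(x_1,x_2)\in(\mathbf{P}^2\setminus\{P\})^2$ is a generic configuration in the sense of the statement if and only if $\pi(x_1)\neq\pi(x_2)$: the inequality automatically forces $x_1\neq x_2$ and is precisely the negation of collinearity with $P$. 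Hence $\pi\times\pi$ restricts to a Zariski-locally trivial fibration
\[
\tilde{F}(\mathbf{P}^2\setminus\{P\},2)\longrightarrow F(\mathbf{P}^1,2)
\]
with fiber $\mathbf{A}^1\times\mathbf{A}^1\cong\mathbf{A}^2$. By the standard multiplicativity of Hodge-Grothendieck polynomials for such bundles, one has
\[
\bar{P}(\tilde{F}(\mathbf{P}^2\setminus\{P\},2);\mathbf{Q})=\mathbf{L}^{-2}t^4\cdot\bar{P}(F(\mathbf{P}^1,2);\mathbf{Q}).
\]

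The remaining input is the polynomial of $F(\mathbf{P}^1,2)$. I would obtain this from the Gysin long exact sequence attached to the closed embedding of the diagonal $\Delta\cong\mathbf{P}^1$ into $(\mathbf{P}^1)^2$. The pushforward $\bar{H}_\bullet(\Delta;\mathbf{Q})\to\bar{H}_\bullet((\mathbf{P}^1)^2;\mathbf{Q})$ is injective in each degree, since by Poincar\'e duality it is computed by cup-product with the nonzero diagonal class $\alpha+\beta\in H^2((\mathbf{P}^1)^2;\mathbf{Q})$, and the top class of $\Delta$ maps to the top class of $(\mathbf{P}^1)^2$. Because all the Hodge structures in sight are pure Tate of determined weight, the long exact sequence then splits into short exact sequences and the Hodge-Grothendieck polynomial becomes additive:
\[
\bar{P}(F(\mathbf{P}^1,2);\mathbf{Q})=(1+\mathbf{L}^{-1}t^2)^2-(1+\mathbf{L}^{-1}t^2)=\mathbf{L}^{-1}t^2+\mathbf{L}^{-2}t^4.
\]

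Putting the two computations together yields $\bar{P}(\tilde{F}(\mathbf{P}^2\setminus\{P\},2);\mathbf{Q})=\mathbf{L}^{-2}t^4\cdot(\mathbf{L}^{-1}t^2+\mathbf{L}^{-2}t^4)=\mathbf{L}^{-4}t^8+\mathbf{L}^{-3}t^6$, as required. The proof is essentially a fibration argument plus a short Gysin computation; the only point requiring a check is the injectivity of the diagonal pushforward, handled by the explicit nonvanishing of the diagonal class, so no genuine obstacle is expected.
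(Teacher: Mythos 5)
Your proof is correct, but it takes a different route from the paper's. The paper first computes the Hodge--Grothendieck polynomial of the full ordered configuration space $F(\mathbf{P}^2\backslash\{P\},2)$ (namely $\mathbf{L}^{-4}t^8+2\mathbf{L}^{-3}t^6+\mathbf{L}^{-1}t^3$), then identifies the non-generic locus as a bundle over $\mathbf{P}^1$ with fiber $F(\mathbf{C},2)$ and removes it via the long exact sequence of the open--closed decomposition; this requires knowing (or checking) that the relevant maps in that sequence behave so that the polynomials simply subtract. You instead observe that for $k=2$ genericity is exactly the condition $\pi(x_1)\neq\pi(x_2)$ for the projection $\pi$ from $P$, so that $\tilde{F}(\mathbf{P}^2\backslash\{P\},2)$ is globally an $\mathbf{A}^2$-bundle over $F(\mathbf{P}^1,2)$, and you conclude by multiplicativity (which is legitimate here since the fiber has Borel--Moore homology concentrated in a single degree with trivial monodromy on the fundamental class) together with a short Gysin computation for $F(\mathbf{P}^1,2)$. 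Your argument is arguably cleaner for this lemma, as it avoids computing the homology of $F(\mathbf{P}^2\backslash\{P\},2)$ and the subtraction step entirely; the paper's decomposition method has the advantage of extending uniformly to the $3$-point case of the next lemma, where ``general position'' imposes a collinearity condition that is no longer detected by the projection alone, so your fibration trick would not carry over unchanged. One cosmetic remark: the injectivity of the pushforward $\bar{H}_\bullet(\Delta;\mathbf{Q})\to\bar{H}_\bullet((\mathbf{P}^1)^2;\mathbf{Q})$ is most directly seen by noting that the point class goes to the point class and $[\Delta]$ goes to the nonzero class $\alpha+\beta$; describing the pushforward itself as a cup product is a slight abuse, though the conclusion you draw from it is right.
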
\begin{proof}
Using the definition, one can compute the Hodge-Grothendieck polynomial of $\bar{H}_{\bullet}({F}(\mathbf{P}^2\backslash\{P\},2);\mathbf{Q})$ that is $\mathbf{L}^{-4}t^8+2\mathbf{L}^{-3}t^6+\mathbf{L}^{-1}t^3.$\\
The space $F(\mathbf{P}^2\backslash\{P\},2)\backslash\tilde{F}(\mathbf{P}^2\backslash\{P\},2)$ consists of couple of points lying on the same line through $P:$ it is fibered over $(P)^{\vee}\cong\mathbf{P}^1$ with fiber equal to $F(\mathbf{C},2).$ Therefore the Borel-Moore homology of $F(\mathbf{P}^2\backslash\{P\},2)\backslash\tilde{F}(\mathbf{P}^2\backslash\{P\},2)$ is $\mathbf{Q}(3)$ in degree 6 and $\mathbf{Q}(1)$ in degree 3, and by considering the long exact sequence induced by the short exact sequence
$$0\rightarrow\tilde{F}(\mathbf{P}^2\backslash\{P\},2)\rightarrow{F}(\mathbf{P}^2\backslash\{P\},2)\rightarrow F(\mathbf{P}^2\backslash\{P\},2)\backslash\tilde{F}(\mathbf{P}^2\backslash\{P\},2)\rightarrow 0$$
the lemma is proved.
\end{proof}

\begin{lem}
	The Hodge-Grothendieck polynomial of $\bar{H}_{\bullet}(\tilde{F}(\mathbf{P}^2\backslash\{P\},3);\mathbf{Q})$ is $\mathbf{L}^{-6}t^{12}+\mathbf{L}^{-5}t^{11}+\mathbf{L}^{-4}t^9+\mathbf{L}^{-3}t^{8}$.
\end{lem}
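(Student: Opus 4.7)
The plan is to realize $\tilde{F}(\mathbf{P}^2\backslash\{P\},3)$ as an open subvariety of a larger space whose Borel-Moore homology is accessible via the Thom isomorphism, and then recover the missing piece via a long exact sequence, in the spirit of the proof of Lemma 2.4. Let $V$ denote the auxiliary space of configurations in $F(\mathbf{P}^2\backslash\{P\},3)$ satisfying only the condition that no two of the three points lie on a line through $P$ (i.e.\ without the additional general position requirement). Then $\tilde{F}(\mathbf{P}^2\backslash\{P\},3)$ is the complement inside $V$ of the closed subvariety $C$ of triples of collinear points; any such collinear triple necessarily lies on a line \emph{not} through $P$, since by hypothesis no two of the points may sit on such a line.

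To compute $\bar{H}_\bullet(V;\mathbf{Q})$, I would use the projection from $P$, namely $\mathbf{P}^2\backslash\{P\}\to\mathbf{P}^1$, which exhibits $V$ as the total space of a rank-$3$ complex vector bundle over $F(\mathbf{P}^1,3)$. By the Thom isomorphism, $\bar{P}(V;\mathbf{Q})=\mathbf{L}^{-3}t^6\cdot\bar{P}(F(\mathbf{P}^1,3);\mathbf{Q})$. I would identify $F(\mathbf{P}^1,3)$ algebraically with $\mathrm{PGL}_2(\mathbf{C})\cong\mathbf{P}^3\backslash Q$, for $Q$ a smooth quadric surface, and run the long exact sequence associated to the closed inclusion $Q\hookrightarrow\mathbf{P}^3$: this gives $\bar{P}(F(\mathbf{P}^1,3);\mathbf{Q})=\mathbf{L}^{-3}t^6+\mathbf{L}^{-1}t^3$, with the degree-$3$ class arising as the kernel of the pushforward $\bar{H}_2(Q)\to\bar{H}_2(\mathbf{P}^3)$, generated by the difference of the two ruling classes of $Q$. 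Combined, this yields $\bar{P}(V;\mathbf{Q})=\mathbf{L}^{-6}t^{12}+\mathbf{L}^{-4}t^9$.

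For $\bar{H}_\bullet(C;\mathbf{Q})$, I would parametrize $C$ by the common line $L$ together with the ordered triple of distinct points on it: this realizes $C$ as a fibration over the open subvariety of $(\mathbf{P}^2)^\vee$ of lines not through $P$ (which is isomorphic to $\mathbf{C}^2$) with fiber $F(L,3)\cong F(\mathbf{P}^1,3)$. Since any $\mathbf{P}^1$-bundle on $\mathbf{C}^2$ is trivial, the fibration is a product, and the Künneth formula in Borel-Moore homology yields $\bar{P}(C;\mathbf{Q})=\mathbf{L}^{-2}t^4\cdot(\mathbf{L}^{-3}t^6+\mathbf{L}^{-1}t^3)=\mathbf{L}^{-5}t^{10}+\mathbf{L}^{-3}t^7$.

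Finally, the long exact sequence in Borel-Moore homology for the open-closed decomposition $\tilde{F}(\mathbf{P}^2\backslash\{P\},3)=V\backslash C$ gives the conclusion. Since $\bar{H}_\bullet(V)$ is concentrated in degrees $12$ and $9$ while $\bar{H}_\bullet(C)$ lives in degrees $10$ and $7$, every map $\bar{H}_k(C)\to\bar{H}_k(V)$ vanishes for trivial dimension reasons; the sequence splits into short exact sequences, producing precisely $\mathbf{L}^{-6}t^{12}+\mathbf{L}^{-5}t^{11}+\mathbf{L}^{-4}t^9+\mathbf{L}^{-3}t^{8}$. The main obstacle is identifying the correct Hodge structure on the degree-$3$ class of $F(\mathbf{P}^1,3)$, which however falls out of the realization of $\mathrm{PGL}_2$ as the complement of a smooth quadric in $\mathbf{P}^3$ and the explicit geometry of the two rulings on $Q$.
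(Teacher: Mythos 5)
Your argument is correct, and it reaches the stated answer by a genuinely different decomposition than the one in the paper. The paper starts from the full ordered configuration space $F(\mathbf{P}^2\backslash\{P\},3)$, whose Borel--Moore homology it computes first ($\mathbf{L}^{-6}t^{12}+3\mathbf{L}^{-5}t^{10}+5\mathbf{L}^{-3}t^7+\mathbf{L}^{-2}t^{5}+2\mathbf{L}^{-1}t^{4}$), and then excises the non-generic locus, which there is a union of three strata (triples on a line through $P$, triples on a line not through $P$, and triples with exactly two points on a line through $P$); this forces one to first assemble the Borel--Moore homology of that union and then track the connecting maps in a long exact sequence where several degrees overlap. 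You instead interpose the space $V$ of triples that are fiberwise distinct for the projection from $P$: this is a rank-$3$ vector bundle over $F(\mathbf{P}^1,3)\cong\mathrm{PGL}_2(\mathbf{C})\cong\mathbf{P}^3\backslash Q$, so the Thom isomorphism gives $\bar P(V)=\mathbf{L}^{-6}t^{12}+\mathbf{L}^{-4}t^9$ with no further work, and the only locus left to remove is the single irreducible stratum $C$ of collinear triples, whose class $\mathbf{L}^{-5}t^{10}+\mathbf{L}^{-3}t^7$ agrees with the paper's value for the corresponding stratum (a useful cross-check). Since $\bar H_\bullet(V)$ and $\bar H_\bullet(C)$ live in disjoint degrees, your long exact sequence splits for free, so no differentials need to be analyzed. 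What your route buys is exactly this robustness: the answer is forced by degree bookkeeping alone, whereas the paper's version requires arguing about maps between groups in coinciding degrees. The only points worth making fully explicit are (i) that a collinear triple in $V$ necessarily lies on a line not through $P$ (two points on a common line through $P$ would already violate the definition of $V$), which you do note, and (ii) that the universal line over the affine plane of lines avoiding $P$ is a trivial $\mathbf{P}^1$-bundle (Quillen--Suslin, or simply the degeneration of the Leray spectral sequence over a base with homology concentrated in one degree), so the K\"unneth computation of $\bar P(C)$ is justified.
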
\begin{proof}
Similarly to the previous proof, we compute first the Hodge-Grothendieck polynomial of $\bar{H}_{\bullet}({F}(\mathbf{P}^2\backslash\{P\},3);\mathbf{Q}),$ that is $\mathbf{L}^{-6}t^{12}+3\mathbf{L}^{-5}t^{10}+5\mathbf{L}^{-3}t^7+\mathbf{L}^{-2}t^{5}+2\mathbf{L}^{-1}t^{4},$ and we then consider the complement of $\tilde{F}(\mathbf{P}^2\backslash\{P\},3)$ in ${F}(\mathbf{P}^2\backslash\{P\},3).$ This space is the union of 3 pieces: the space of triples lying on the same line not passing through $P,$ the space of triples lying on the same line through $P,$ and the space consisting of triples where exactly 2 points lie on the same line through $P.$ The Hodge-Grothendieck polynomials of their Borel-Moore homology are respectively $\mathbf{L}^{-5}t^{10}+\mathbf{L}^{-3}t^7$, $\mathbf{L}^{-4}t^8+2\mathbf{L}^{-3}t^7+\mathbf{L}^{-2}t^5+2\mathbf{L}^{-1}t^4$ and $3\mathbf{L}^{-5}t^{10}+3\mathbf{L}^{-3}t^7.$\\
By computing first the Borel-Moore homology of the union of these three spaces and then considering the long exact sequence induced by 
$$0\rightarrow\tilde{F}(\mathbf{P}^2\backslash\{P\},3)\rightarrow{F}(\mathbf{P}^2\backslash\{P\},3)\rightarrow F(\mathbf{P}^2\backslash\{P\},3)\backslash\tilde{F}(\mathbf{P}^2\backslash\{P\},3)\rightarrow 0$$ we finally get that of $\tilde{F}(\mathbf{P}^2\backslash\{P\},3).$
\end{proof}

\begin{lem}
	There are isomorphisms
	$$\bar{H}_{\bullet}(\tilde{B}(\mathbf{P}^2\backslash\{P\},2);\pm\mathbf{Q})\xrightarrow{\sim}\bar{H}_{\bullet}({B}(\mathbf{P}^2\backslash\{P\},2);\pm\mathbf{Q})$$
	$$\bar{H}_{\bullet}(\tilde{B}(\mathbf{P}^2\backslash\{P\},3);\pm\mathbf{Q})\xrightarrow{\sim}\bar{H}_{\bullet}({B}(\mathbf{P}^2\backslash\{P\},3);\pm\mathbf{Q})$$
	induced by the natural inclusions.
\end{lem}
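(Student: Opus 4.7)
The plan is to exploit, for each $k\in\{2,3\}$, the open-closed decomposition of $B(\mathbf{P}^2\backslash\{P\},k)$ into the open subspace $\tilde{B}(\mathbf{P}^2\backslash\{P\},k)$ and its closed complement, and the long exact sequence it induces in Borel-Moore homology with twisted coefficients. Because swapping any two points of a configuration remains an odd permutation after restriction to the complement, the sheaf $\pm\mathbf{Q}$ pulls back to the sign local system, so it suffices to show that the complement has vanishing twisted Borel-Moore homology: the long exact sequence will then force the open inclusion to induce an isomorphism in every degree.

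For $k=2$, the complement is the space of unordered pairs lying on a common line through $P$, and is a locally trivial fibration over $(P)^{\vee}\cong\mathbf{P}^1$ with fiber $B(\mathbf{C},2)$. Lemma 2.1.a yields $\bar{H}_{\bullet}(B(\mathbf{C},2);\pm\mathbf{Q})=0$, and the Leray spectral sequence of the fibration immediately propagates this vanishing to the total space.

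For $k=3$, I would stratify the complement into two locally closed pieces: the closed stratum $A$ of triples all lying on the same line through $P$, and its open complement $B$, consisting of triples in which exactly one pair shares a line through $P$. The stratum $A$ fibers over $\mathbf{P}^1$ with fiber $B(\mathbf{C},3)$, and therefore has vanishing twisted Borel-Moore homology by Lemma 2.1.a and Leray. The stratum $B$ admits a projection to $\mathbf{P}^1$ sending a triple to the unique line $\ell$ that contains a pair, with fiber $B(\mathbf{C},2)\times(\mathbf{P}^2\backslash\ell)\cong B(\mathbf{C},2)\times\mathbf{C}^2$. Since $\ell$ is canonically determined by the configuration, the restriction of $\pm\mathbf{Q}$ to this fiber is the external product of the sign local system on $B(\mathbf{C},2)$ and the constant sheaf on $\mathbf{C}^2$, so its twisted Borel-Moore homology vanishes by Lemma 2.1.a. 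Leray then gives the vanishing for stratum $B$, and assembling $A$ and $B$ through the long exact sequence of the closed embedding $A\hookrightarrow A\cup B$ yields the vanishing of the whole complement, completing the $k=3$ case.

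The main technical point is the correct identification of the restriction of $\pm\mathbf{Q}$ to each stratum. What makes the argument clean is that on each stratum the distinguished line through $P$ is uniquely determined by the configuration, producing a canonical splitting of the unordered configuration into the subset on $\ell$ and its complement. The only nontrivial monodromy is therefore the permutation of indistinguishable points on $\ell$, which is realized by odd elements of the ambient symmetric group $\mathfrak{S}_k$, so that the restricted local system is again the sign one and Lemma 2.1.a can be invoked in the form stated.
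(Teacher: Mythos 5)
Your overall strategy --- open--closed decomposition, vanishing of the twisted Borel--Moore homology of the complement via fibrations with fibers of the form $B(\mathbf{C},k)$, and Lemma 2.1.a --- is exactly the paper's, and your $k=2$ case and your identification of the restricted local systems are correct. But for $k=3$ your description of the complement is incomplete, and this is a genuine gap. The paper defines $\tilde{B}(\mathbf{P}^2\backslash\{P\},k)$ by \emph{two} conditions: the points must be in general position \emph{and} no two of them may lie on a common line through $P$ (this is why, in the proof of Lemma 2.5, the complement of $\tilde{F}(\mathbf{P}^2\backslash\{P\},3)$ is described as a union of \emph{three} pieces, the first being triples on a line \emph{not} passing through $P$). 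Consequently the complement of $\tilde{B}(\mathbf{P}^2\backslash\{P\},3)$ contains, besides your strata $A$ and $B$, a third stratum of collinear triples lying on a line that avoids $P$; note that such a triple has \emph{no} pair on a common line through $P$ (a line through $P$ meets a line avoiding $P$ in a single point), so this stratum is disjoint from both $A$ and $B$ and your trichotomy misses it entirely. As written, your long exact sequence argument only proves the isomorphism onto the open subset obtained by removing $A\cup B$, not onto $\tilde{B}(\mathbf{P}^2\backslash\{P\},3)$.

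The repair is short and is the reason the paper's one-line proof invokes the fiber $B(\mathbf{P}^1,3)$ alongside $B(\mathbf{C},2)$ and $B(\mathbf{C},3)$: the missing stratum fibers over the space of lines not through $P$ (an affine plane in the dual $\mathbf{P}^2$) with fiber $B(\mathbf{P}^1,3)$, on which $\pm\mathbf{Q}$ restricts to the sign local system, and $\bar{H}_{\bullet}(B(\mathbf{P}^1,3);\pm\mathbf{Q})=0$ by Lemma 2.1.b since $3>1+1$. Adding this stratum to your assembly of the long exact sequences closes the argument. (If instead you intended to read ``general position'' as merely ``no two points on a line through $P$,'' you would be proving a statement about a larger space than the $\tilde{B}$ the paper actually uses in Lemmas 2.4, 2.5 and in the configuration analysis, so the discrepancy matters.)
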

\begin{proof}
	As we noticed before, the spaces $B(\mathbf{P}^2\backslash\{P\},2)\backslash\tilde{B}(\mathbf{P}^2\backslash\{P\},2)$ and $B(\mathbf{P}^2\backslash\{P\},3)\backslash\tilde{B}(\mathbf{P}^2\backslash\{P\},3)$ are union of fiber spaces with fibers $B(\mathbf{C},2)$, $B(\mathbf{C},3)$ or $B(\mathbf{P}^1,3)$ and by Lemma 2.1 all these fibers have trivial twisted Borel-Moore homology.
\end{proof}

\begin{lem}[\cite{gorinov2005real}]\label{cor3.5}
	Let $p: N'\rightarrow N$ be a finite sheeted covering of manifolds, and let $\mathcal{L}$ be a local system of coefficients on $N'$. Then $\bar{H}_{\bullet}(N',\mathcal{L})=\bar{H}_{\bullet}(N,p(\mathcal{L}))$, where $p(\mathcal{L})$ denotes the direct image of the system $\mathcal{L}.$
\end{lem}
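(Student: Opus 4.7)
The plan is to prove this sheaf-theoretically, identifying Borel-Moore homology with hypercohomology of the dualizing complex. For a reasonable (locally compact, finite-dimensional) space $X$ and a local system $\mathcal{L}$, one has the canonical identification
$$\bar{H}_k(X;\mathcal{L}) \;\cong\; \mathbb{H}^{-k}(X;\,\omega_X\otimes\mathcal{L}),$$
where $\omega_X$ denotes the topological dualizing complex. Since $p\colon N'\to N$ is a finite-sheeted covering of manifolds, it is simultaneously a local homeomorphism between manifolds of the same dimension and a proper map. This yields two facts to exploit: first, $p^{*}\omega_N\cong\omega_{N'}$, because the relative dualizing complex of an étale map (in the topological sense) is trivial; second, since the fibres are finite discrete, $R^{i}p_{*}=0$ for $i>0$, so $Rp_{*}=p_{*}=p(-)$ is exact.

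First I would rewrite the left-hand side using the identification above:
$$\bar{H}_\bullet(N';\mathcal{L}) \;\cong\; \mathbb{H}^{-\bullet}(N';\,\omega_{N'}\otimes\mathcal{L}) \;\cong\; \mathbb{H}^{-\bullet}(N';\,p^{*}\omega_N\otimes\mathcal{L}).$$
Then I would apply the projection formula to get
$$Rp_{*}\bigl(p^{*}\omega_N\otimes\mathcal{L}\bigr)\;\cong\;\omega_N\otimes Rp_{*}\mathcal{L}\;=\;\omega_N\otimes p(\mathcal{L}),$$
and push forward: since $Rp_{*}$ preserves hypercohomology, this gives
$$\mathbb{H}^{-\bullet}(N';\,p^{*}\omega_N\otimes\mathcal{L}) \;\cong\; \mathbb{H}^{-\bullet}(N;\,\omega_N\otimes p(\mathcal{L})) \;\cong\; \bar{H}_\bullet(N;\,p(\mathcal{L})),$$
which is the stated isomorphism.

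The main point to verify carefully is the identification $p^{*}\omega_N\cong\omega_{N'}$; it reduces to the statement that for a local homeomorphism between manifolds of equal dimension the relative dualizing sheaf is trivial, and can be checked locally, where $p$ is a disjoint union of homeomorphisms. An essentially equivalent route that avoids any mention of dualizing complexes is the Leray spectral sequence: the vanishing $R^{i}p_{*}\mathcal{L}=0$ for $i>0$ (finite discrete fibres) makes it collapse on $E_2$, giving $H^{\bullet}(N';\mathcal{L})\cong H^{\bullet}(N;p(\mathcal{L}))$, and the same argument for $p_{!}=p_{*}$ (valid because $p$ is proper) delivers the analogous statement in Borel-Moore homology. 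Either approach produces a short, formal proof, and I do not anticipate a genuine obstacle beyond setting up the duality machinery.
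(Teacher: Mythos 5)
The paper does not actually prove this statement: it is quoted verbatim from Gorinov's article and used as a black box, so there is no in-paper argument to compare yours against. That said, your sheaf-theoretic derivation is correct and complete. The identification $\bar{H}_k(X;\mathcal{L})\cong\mathbb{H}^{-k}(X;\omega_X\otimes\mathcal{L})$ is the standard definition of Borel--Moore homology with local coefficients on a locally compact, finite-dimensional space; for a finite covering $p$ one indeed has $p^!\cong p^*$ (hence $p^*\omega_N\cong\omega_{N'}$) because $p$ is a local homeomorphism, and $Rp_*=p_*=Rp_!$ because the fibres are finite and discrete, so the projection formula $Rp_!\bigl(p^*\omega_N\otimes\mathcal{L}\bigr)\cong\omega_N\otimes p_*\mathcal{L}$ applies with no derived corrections (the tensor factors are local systems of $\mathbf{Q}$-vector spaces, so flatness is automatic). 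The only point worth flagging is that your argument is considerably heavier machinery than what the original source uses: Gorinov's proof is essentially the chain-level observation that over a trivializing open set $U\subseteq N$ the preimage $p^{-1}(U)$ is a disjoint union of copies of $U$ and $p_*\mathcal{L}|_U$ is the corresponding direct sum, so locally finite chains on $N'$ with coefficients in $\mathcal{L}$ are literally the same complex as locally finite chains on $N$ with coefficients in $p_*\mathcal{L}$. Your route buys generality and formal cleanliness at the cost of invoking Verdier duality; the elementary route is shorter and is what the cited reference has in mind. Either way, the statement is established.
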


\section{Gorinov-Vassiliev's method}
From the previous section, we recall that trigonal curves of genus 5 can be either described as elements of the linear system $|3E+5F|$ in the surface $Z=\mathbb{F}_1,$ or as projective plane quintics having exactly a node or a cusp. Let $P\in\mathbf{P}^2$ be this singular point.\\
We define $V$ to be the vector space of global sections of $\mathcal{O}_Z(3E+5F)$. Then, $V$ is isomorphic to the vector space of polynomials defining plane curves of degree 5 having at least a singular point at $P.$\\
The space $\mathcal{T}_5$ can be represented as the quotient of an open subset of $V$ by the action of the automorphism group of $Z$. The open subset we are interested in, that we will denote by $X$, is the subset in $V$ defining smooth curves on $Z,$ i.e. curves meeting the exceptional divisor exactly twice.
The complement of $X$ in $V$ will be denoted by $\Sigma$ and it is called the \textit{discriminant hypersurface.} To compute the cohomology of $X$ we will use the Gorinov-Vassiliev's method, that consists of computing the Borel-Moore homology of the discriminant, thanks to the Alexander duality:
$$\tilde{H}^{\bullet}(V\backslash\Sigma;\mathbf{Q})\cong H^{\bullet+1}(V,V\backslash\Sigma;\mathbf{Q})\cong\bar{H}_{2(N-1-\bullet)}(\Sigma;\mathbf{Q})(-N),$$
where $N$ is the complex dimension of $V.$ \\
Following \cite[Section 2.1]{Tom}, the Borel-Moore homology of $\Sigma$ is obtained by constracting a simplicial resolution $\sigma$ of $\Sigma$, starting from a collection $(X_i)_{i\in I }$ of families of configurations in $Z,$ satisfying the axioms that are listed in \cite[List 2.3]{Tom}. Then we define an increasing filtration $(\text{Fil}_i)_{i\in I}$ of $\sigma$, whose associated spectral sequence is proved to converge to the Borel-Moore homology of the discriminant:
\begin{prop}[\cite{Tom}]
	The filtration $\text{Fil}_i$ defines a spectral sequence that converges to the Borel-Moore homology of $\Sigma$, whose $E^1_{p,q}\mbox{-}$term is isomorphic to $\bar{H}_{p+q}(F_p;\mathbf{Q}),$
	where $F_i:=\text{Fil}_i\backslash \text{Fil}_{i-1}$.
\end{prop}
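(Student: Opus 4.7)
The plan is to follow the standard machinery for filtered spaces in Borel-Moore homology, applied to the simplicial resolution $\sigma$ of $\Sigma$. This is essentially the argument in \cite[Section 2.1]{Tom}, which I would reproduce in this setting.

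First I would verify that the augmentation map $\epsilon: \sigma \to \Sigma$ is a proper map which induces an isomorphism $\bar{H}_{\bullet}(\sigma;\mathbf{Q}) \cong \bar{H}_{\bullet}(\Sigma;\mathbf{Q})$. This is the content of the general simplicial resolution construction: each fiber $\epsilon^{-1}(f)$ over a point $f \in \Sigma$ is a simplex (the join of the configurations recording the singular behavior of $f$), hence contractible, and properness of $\epsilon$ together with the Leray spectral sequence forces the isomorphism on Borel-Moore homology. The axioms in \cite[List 2.3]{Tom} are precisely tailored so that the collection $(X_i)_{i \in I}$ yields a well-defined, proper simplicial resolution; once they are granted, this step reduces the problem to computing $\bar{H}_{\bullet}(\sigma;\mathbf{Q})$.

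Next I would introduce the increasing filtration $\text{Fil}_i$ on $\sigma$, where $\text{Fil}_i$ corresponds to those simplices built from the first $i$ types of configurations in the ordered collection $(X_i)_{i \in I}$. Since each $\text{Fil}_i$ is closed in $\sigma$, the long exact sequences in Borel-Moore homology of the pairs $(\text{Fil}_i, \text{Fil}_{i-1})$ assemble, by the classical construction for filtered spaces, into a spectral sequence with
\begin{equation*}
E^1_{p,q} = \bar{H}_{p+q}(\text{Fil}_p, \text{Fil}_{p-1};\mathbf{Q}).
\end{equation*}
Excision in Borel-Moore homology identifies this relative group with $\bar{H}_{p+q}(F_p;\mathbf{Q})$, where $F_p = \text{Fil}_p \setminus \text{Fil}_{p-1}$ is locally closed in $\sigma$.

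Finally, convergence to $\bar{H}_{\bullet}(\sigma;\mathbf{Q}) \cong \bar{H}_{\bullet}(\Sigma;\mathbf{Q})$ follows because the filtration is exhaustive (every point of $\sigma$ lies in some $\text{Fil}_i$) and, for the index set $I$ arising in our trigonal-quintic problem, each $F_p$ is a finite-dimensional real semi-algebraic set, so only finitely many $E^1_{p,q}$ with fixed $p+q$ are nonzero; thus standard convergence criteria for spectral sequences of filtered spaces apply. The main obstacle, in practice, is not the formal spectral sequence argument but the preliminary verification that the chosen families $(X_i)_{i \in I}$ satisfy Tommasi's List 2.3 — that the singular sets of curves in $X$ can be organized into a finite ordered collection respecting the closure and codimension hypotheses needed for the resolution to behave well — and this is what the subsequent sections of the paper must establish concretely for $V = H^0(\mathbb{F}_1, \mathcal{O}(3E+5F))$.
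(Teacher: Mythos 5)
The paper does not prove this proposition itself — it is quoted directly from Tommasi's work \cite{Tom} (see Section 2.1 and List 2.3 there) — and your sketch is precisely the standard argument given in that reference: properness of the augmentation of the simplicial resolution with contractible fibers, the long exact sequences of the closed filtration assembling into a spectral sequence with $E^1_{p,q}=\bar{H}_{p+q}(\text{Fil}_p,\text{Fil}_{p-1};\mathbf{Q})\cong\bar{H}_{p+q}(F_p;\mathbf{Q})$, and convergence from finiteness of the index set. Your outline is correct and takes essentially the same approach as the cited source.
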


In particular, if $V$ is the vector space of global sections of $\mathcal{O}_Z(3E+5F)$ or equivalently the vector space of polynomial equations of degree 5 whose zero locus are plane projective curves having at least a node or a cusp at a fixed point $P$, the dimension of $V$ is 18.\footnotemark\footnotetext{The dimension of the vector space of polynomials defining plane quintics having at least a singular point is in fact $21-3=18$.}
To compute the dimension, we consider a general plane quintic having at least a node or a cusp at the point that we will blow up: $P=\left[1,0,0\right]$, that is a curve defined by a polynomial $f\in\mathbf{C}\left[x_0,x_1,x_2\right]$ having degree $\leq3$ with respect to the variable $x_0.$ For any such curve we can consider a projection with center $P$: fix a line $l$ not passing through $P$, for example $l:=\{\left[0,y_1,y_2\right]\}$, and take the map sending all points of the curve distinct from $P$ to the point of intersection between the line connecting the point to $P$ and $l$. 
\\
The preimage of any point through this map is given by points of the curve on the same line through $P$, which has parametric equation
$$r:\begin{cases} x_0=t_0\\ x_1=t_1y_1\\x_2=t_1y_2
\end{cases},\qquad \left[t_0,t_1\right]\in\mathbf{P}^1.$$
Hence, because any line through $P$ corresponds to a line of the ruling in the blow up, any curve we want to consider can be embedded in the blow up via the mapping
$$\left[t_0,t_1y_1,t_1y_2\right]\hookrightarrow\left[t_0,t_1y_1,t_1y_2\right]\times\left[y_1,y_2\right]$$ such that $f(t_0,t_1y_1,t_1y_2)=0,$ i.e. it has equation
$$t_1^2(t_0^3g_2(y_1,y_2)+t_0^2t_1g_3(y_1,y_2)+t_0t_1^2g_4(y_1,y_2)+t_1^3g_5(y_1,y_2)),$$ where each $g_i$ is a homogeneous polynomial of degree $i.$ Counting the number of parameters will indeed give 18.\\
The automorphism group of the homogeneous coordinate ring of $Bl_{P}\mathbf{P}^2$ is the set of automorphisms of the graded ring $\mathbf{C}\left[x_0,x_1,x_2\right]$ that fix the point that is blown up, i.e.
$$G=\left\{\begin{bmatrix} *&*&* \\
0&*&* \\ 0&*&*\end{bmatrix}\in GL(3,\mathbf{C})\right\} \supset \mathbf{C}^*\times GL(2,\mathbf{C}).$$
Note that ignoring the second and the third term in the first row of each of the matrices in $G$ means contracting the vector space $\mathbf{C}\left[x_1,x_2\right]_1\cong\mathbf{C}^2$ to a point, therefore $G$ is homotopy equivalent to $\mathbf{C}^*\times GL(2,\mathbf{C}).$\\
Note also that $G$ contains the normal unipotent subgroup
$$\left\{\begin{bmatrix}1&*&* \\
0&1&* \\ 0&0&1\end{bmatrix}\in GL(3,\mathbf{C})\right\}$$ hence it is not reductive and we cannot construct our moduli space as a quotient by $G.$\\
However, we can consider its reductive part $\mathbf{C}^*\times GL(2,\mathbf{C}),$ construct the GIT quotient $X/(\mathbf{C}^*\times GL(2,\mathbf{C}))$ and compute its cohomology instead: $X/(\mathbf{C}^*\times GL(2,\mathbf{C})$ is the space of isomorphism classes of triples $(C,L,H),$ where $C$ is a curve of genus $g,$ $L$ is the linear system defining its trigonal structure and $H$ is the hyperplane section corresponding to the line $l$ not meeting $P$ that we defined before, hence this is a $\mathbf{C}^2\mbox{-}$bundle over $\mathcal{T}_5$, in the orbifold sense, and therefore they have same rational cohomology.\\

In particular, we will first consider the reductive subgroup $\{1\}\times GL(2,\mathbf{C})\subset\mathbf{C}^*\times GL(2,\mathbf{C})$ and its GIT quotient $X/GL(2,\mathbf{C})$. Then we will compute its cohomology by using a generalized version of Leray-Hirsch theorem and we will consider the orbifold $\mathbf{C}^*\mbox{-}$bundle
$$X/GL(2,\mathbf{C})\xrightarrow{\mathbf{C}^*} X/(\mathbf{C}^*\times GL(2,\mathbf{C})).$$
and deduce the cohomology of the base space from the spectral sequence associated to this bundle.

\subsection{Generalized Leray-Hirsch theorem}
We want to prove that there exists an isomorphism of graded $\mathbf{Q}\mbox{-}$vector spaces with mixed Hodge structures
$$H^{\bullet}(X/GL(2,\mathbf{C});\mathbf{Q})\otimes H^{\bullet}(GL(2,\mathbf{C});\mathbf{Q})\cong H^{\bullet}(X;\mathbf{Q}).$$
By \cite[Th.2]{PS} it suffices to prove the surjectivity of the orbit map on cohomology
$$\rho^*: \bar{H}_{2\operatorname{dim}V-i-1}(\Sigma;\mathbf{Q})\cong H^i(X;\mathbf{Q})\rightarrow H^i(GL(2,\mathbf{C});\mathbf{Q})\cong \bar{H}_{2\operatorname{dim}M-i-1}(D;\mathbf{Q}),$$
where $M$ denotes the space of $2\times 2$ matrices and $D$ the discriminant of $GL(2,\mathbf{C})$ in $M.$\\

We know that the cohomology of $GL(2,\mathbf{C})$ has generators in degrees $i=1,3,$ and the generators of $\bar{H}_{\bullet}(D;\mathbf{Q})$ are $\left[D\right]\in\bar{H}_{6}(D;\mathbf{Q})$ and $\left[R\right]\in \bar{H}_{4}(D;\mathbf{Q}),$ where we can assume $R$ to be the subvariety of matrices with only zeros in the first column.\\
Moreover, from the spectral sequence that will be exhibited in Section 4.7, $\bar{H}_{34}(\Sigma;\mathbf{Q})=\langle\left[\Sigma\right]\rangle,$ and $\bar{H}_{32}(\Sigma;\mathbf{Q})=\langle\left[\Sigma_1\right],\left[\Sigma_2\right]\rangle$, where $\Sigma_1$ is the subspace in $V$ of polynomials defining curves having a singular point on $E$, and $\Sigma_2$ is the subspace in $V$ of polynomials defining a singularity on a fixed line of the ruling $L.$\\

Let's consider the extension of the orbit map
$$
D\rightarrow \Sigma$$
and the image of an element in $R:$

$$A=\begin{pmatrix}

0&b\\0&d
\end{pmatrix}\mapsto A\cdot f(x_0,x_1,x_2)=f(x_0,bx_2,dx_2)=\alpha(b,c)x_2^2 h_3(x_0,x_2),$$
where $\alpha(b,c)$ is some constant and $h_3$ is the product of $3$ lines through the point $\left[0,1,0\right].$ So, elements of $R$ are mapped to polynomials whose zero loci are the union of a double line of the ruling of fixed equation $y_2=0$ and three lines through a point of the ruling $(\left[t_0,t_1,0\right],[1,0]).$\\
Similarly, elements in $D$ are mapped to curves which are the union of any double line of the ruling and three lines through a point of that ruling.\\
Hence we can deduce that $\rho^*(\left[\Sigma\right])$ is a non-zero multiple of $\left[D\right],$ while the preimage of $\left[R\right]$ through $\rho^*$ must be a non-trivial linear combination of $\left[\Sigma_1\right],\left[\Sigma_2\right],$ proving the surjectivity of the map in cohomology.

\section{Application of the method}

In this section we want to apply the method introduced in the previous section. First of all, we produce a list of all the possible configurations of singularities of genus 5 curves in $Bl_P\mathbf{P}^2$ meeting the exceptional divisor $E$ at least twice.\\
To do so, we recall that we are considering curves in $\mathbb{F}_1$ which are elements of the linear system $|3E+5F|.$
Since all singularities are obtained as degenerations of nodes, we will first consider only such singularities.\\
Assume that the curve is irreducible: by computing the arithmetic genus we get an upper bound for the number of singularities. For instance, by the \emph{genus formula}, we have that
$$g(3E+5F)=1+\frac{1}{2}((3E+5F)^2+(3E+5F)\cdot K)=5,$$
where $K$ is the canonical divisor on $\mathbb{F}_1,$ so we can have at most 5 ordinary double points.\\
Then, we will consider all the possible ways in which the curve can be reduced. Here we will have to take into account not only the singularities of each irreducible component, but also all the intersections between them.\\
Finally we will consider all the possible degenerations of the singularities obtained in this way (points can be on the exceptional divisor or points in general position can become collinear, etc...) and all the subsets of finite configurations. \\
For any configuration of singularities, the elements in $V$ which are singular at least at that configuration form a vector space and we can compute its codimension in $V$, that will be written in brackets. By ordering all the configurations obtained by increasing codimension, and then by increasing number of points, we will get a list of configurations indexed by $(j)$, and by defining $X_j$ as the space of configurations of type $(j)$ we will get a sequence of families of configurations that will satisfy conditions 1-7 in \cite[List 2.3]{Tom}.\\
We report only a shorter version of this list, omitting for instance all configurations containing rational curves since they will give no contribution to the Borel-Moore homology of the discriminant by \cite[Lemmas 2.19, 2.20]{Tom}, and combining similar configurations that will also give no contribution.
\\
In the following, we denote a configuration of points by \emph{general} if it is a configuration of points in general position, where no point is contained in $E$ and no two points lie in the same line of the ruling. 
Note that we consider a single point to be `general' also if it belongs 
to a line of the ruling passing through a point in $E$ contained in the same configuration (this latter configuration is contained in the one where the point is in $P^2\backslash\{P\}$). 
We will also use the following notation:\\

\begin{tabular}{l l}
\emph{line of the ruling} & it is an element in $|F|,$\\
& i.e. the strict transform of a line in $\mathbf{P}^2$ passing through $P;$\\
\emph{line} & it is an element in $|E+F|,$\\
& i.e. the strict transform of a line in $\mathbf{P}^2$ not passing through $P;$\\
\emph{conic} $C_P$& it is an element in $|E+2F|,$ \\
&i.e. the strict transform of a conic in $\mathbf{P}^2$ passing through $P;$\\
\emph{conic} $C$ & it is an element in $|2E+2F|,$\\
& i.e. the strict transform of a line in $\mathbf{P}^2$ not passing through $P.$
\end{tabular}

\begin{enumerate}
\item A point on the exceptional divisor $E;$ $\left[3\right]$
\item A general point; $\left[3\right]$
\item Two points on $E;$ $\left[5\right]$
\item Two (or three) points on a line of the ruling;  $\left[6\; (7)\right]$ 
\item A point on $E$ + a general point; $\left[6\right]$
\item Two general points; $\left[6\right]$
\item Three points or more points on $E;$ $\left[6\right]$
\item Two points on $E$ + a general point; $\left[8\right]$
\item A point \emph{(that can be either on $E$, or general)} + two (or three) points on a line of the ruling; $\left[9\;(10)\right]$ 


\item A point on $E$ + two general points;$\left[9\right]$
\item Three (or four) points on a line $L$; $\left[9\;(10)\right]$ 
\item Three (or resp. four, five) general points; $\left[9\;(12,15)\right]$ 
\item A point on $E$ + two (or three) points on a line of the ruling $F$ + the point of intersection between $F$ and $E;$ $\left[9\;(10)\right]$ 

\item Three points on $E$ + a general point; $\left[9\right]$
\item Two points on $E$ + two (or three) points on a line of the ruling; $\left[10\;(11)\right]$ 

\item Two points on $E$ + two (or three) points on a line of the ruling $F$ + the point of intersection between $F$ and $E$; $\left[10\;(11)\right]$


\item Two points on $E$ + two general points; $\left[11\right]$
\item Three points on $E$ + two (or three) points on a line of the ruling; $\left[11\;(12)\right]$ 
\item Three points on $E$ + two (or three) points on a line of the ruling $F$ + the point of intersection between $F$ and $E$; $\left[11\;(12)\right]$

\item Two points on each of two lines of the ruling (or resp. two points on a ruling and three points on the other one, or three points on each of the two rulings); $\left[12\;(13,14)\right]$ 
\item Two general points + two (or three) points on a line of the ruling; $\left[12\;(13)\right]$
\item  A point \emph{(that can be either on $E$, or general)} + three (or four) points on a line $L$; $\left[12\;(13)\right]$ 
\item A point on $E$ + three (or four) general points; $\left[12\;(15)\right]$ 
\item A point on $E$ + two (or three) points on a line of the ruling $F $+ a general point; $\left[12\;(13)\right]$
\item A point on $E$ + two (or three) points on a line of the ruling $F $+ a general point + the point of intersection between $F$ and $E$; $\left[12\;(13)\right]$ 
\item Three points on $E$ + two general points; $\left[12\right]$

\item Two points on $E$ + three (or four) points on a line $L$; $\left[14\;(15)\right]$ 
\item Two points on $E$ + three general points; $\left[14\right]$
\item Two points on a line of the ruling $F$ + three points on a line $L$; $\left[14\right]$
\item Two points on a line of the ruling $F$ + three points on a line $L$ + the point of intersection between $F$ and $L$; $\left[14\right]$
\item Five (or six) points on a conic $C_P$ ($C$); $\left[14\;(17)\right]$ 
\item Two points on each of two lines of the ruling $F_1, F_2$ + the intersection points with $E$ + a point on $E$; $\left[14\right]$
\item Two points on each of two lines + the point of intersection; $\left[15\right]$
\item Two points on each of two lines of the ruling (or two points on a ruling and three on the other one) + a general point; $\left[15\;(16)\right]$ 
\item A point on $E$ + two general points + two or more points on a line of the ruling; $\left[15\right]$
\item Three general points + two (or three) points on a line of the ruling; $\left[15\;(16)\right]$ 
\item A point on $E$ + a general point + three (or four) points on a line; $\left[15\;(16)\right]$
\item Two general points + three (or four) points on a line; $\left[15\;(16)\right]$ 
\item Three points on $E$ + three (or four) points on a line; $\left[15\;(16)\right]$ 
\item Three points on $E$ + three general points; $\left[15\right]$

\item Two points on $E$ + two general points + two points on a line of the ruling $F$+ the point of intersection between $E$ and $F$; $\left[16\right]$
\item Five points on a conic $C_P$ + a general point; $\left[17\right]$
\item Three points on $E$ + four points on a conic $C_P$; $\left[17\right]$

\item Two points on a ruling $F$ + three points on a line $L$ + the intersection point between $F$ and $L$ + a general point; $\left[17\right]$
\item Three points on each of two rulings + a general point; $\left[17\right]$
\item Three points on each of two lines + the point of intersection; $\left[17\right]$

\item 7 points: three points of intersection between two conics $C_P$ and $C_P'$, one of which on $E$ + four points of intersection with a line; $\left[17\right]$
\item 7 points: three points of intersection between two conics $C_P$ and $C_P'$, none of which on $E$ + four points of intersection with a line;  $\left[17\right]$

\item 7 points: four points of intersection between two conics $C,$ $C_P$+ three points of intersection with a line of the ruling; $\left[17\right]$
\item 8 points: a point on $E$ + two points on each of two rulings $F_1$, $F_2$ + the points of intersection between $F_1$ and $E$, and $F_2$ and $E$ + a general point;  $\left[17\right]$
\item 8 points: two points on $E$ + three points on a line $L$ + the intersection points of a line of the ruling $F$ with $E$ and $L$ + another point on $F$;  $\left[17\right]$
\item 8 points: three points of intersection of two conics $C_P$ and $C_P'$, each meeting a line of the ruling $F$ and $E$ at one point + the point of intersection between $F$ and $E$;  $\left[17\right]$

\item 8 points: two points of intersection between two lines of the ruling $F_1$, $F_2$ and a line $L$ + 6 points of intersection with a conic $C$ meeting each line at two distinct points; $\left[17\right]$
\item 8 points: three points of intersection between a line of the ruling $F$ and two lines $L_1$, $L_2$ + five points of intersection with a conic $C_P$ meeting each line twice and $F$ only once, outside $E$; $\left[17\right]$

\item 8 points: three points of intersection between a line of the ruling $F$ and two lines $L_1$, $L_2$ + five points of intersection with a conic $C_P$ meeting each line twice and $F$ at the intersection point with $E$;  $\left[17\right]$

\item 9 points: four points of intersection between $E$, two lines of the ruling $F_1$, $F_2$ and a line $L$ + five points of intersection with a conic $C_P$ meeting $L$ twice and $E,$ $F_1,$ $F_2$ once;  $\left[17\right]$

\item 9 points: three points of intersection between $E$ and three lines of the ruling $F_1$, $F_2$, $F_3$ + 6 points of intersection with a conic $C$ meeting each $F_i$ at two distinct points; $\left[17\right]$

\item 9 points: the points of intersection between two lines of the ruling and three general lines;  $\left[17\right]$
\item 10 points: the points of intersection between $E$, three lines of the ruling and two lines;  $\left[17\right]$

\item The whole $Bl_P{\mathbf{P}^2}$. $\left[18\right]$
\end{enumerate}

Since simplicial bundles are non orientable, we will consider the Borel- Moore homology with coefficients in the local system $\pm\mathbf{Q}.$\\
We also recall from Lemmas 2.1 and 2.3 that configurations with at least three points on a rational curve, configurations with at least two points on a rational curve minus a point, and configurations with at least three general points give no contribution.
Thus, among the first 41 configurations, only the following have non-trivial Borel-Moore homology: 
\begin{itemize}
\item[(A)] A point on $E$. $\left[3\right]$
\item[(B)] A general point. $\left[3\right]$
\item[(C)] Two points on $E$. $\left[5\right]$
\item[(D)] A point on $E$ + a general point. $\left[6\right]$
\item[(E)] Two general points. $\left[6\right]$
\item[(F)] One general point + two points on $E$.  $\left[8\right]$
\item[(G)] Two general points + one point on $E$.  $\left[9\right]$
\item[(H)] Two general points + two points on $E$.  $\left[11\right]$
\end{itemize}

We will also prove in Appendix A that there are only other four configurations having non-trivial Borel-Moore homology:
\begin{itemize}
	\item[(I)] 7 points: config. 47; $\left[17\right]$
	\item[(J)] 7 points: config. 48; $\left[17\right]$
	
	\item[(L)] 8 points: config. 55; $\left[17\right]$
	
	\item[(M)] Whole $Bl_{P}\mathbf{P}^2.$ $\left[18\right]$
\end{itemize}

Since we are studying singular configurations of curves that are equivalent to plane projective quintics having at least one singularity, we can deduce their Borel-Moore homology by considering their equivalent description in the projective plane, by fixing a point $P$ that is the one that, when blown-up, will give us the corresponding curve in $\mathbb{F}_1$.\\
Note that, the configuration spaces that we will consider in the following are empty unless they are defined as the singular loci of the plane quintics that will be described.

\subsection{Columns (A)-(H)}
By applying \cite[Th. 2.8]{gorinov2005real}, we get the following results.\\
The space $F_A$ is a $\mathbf{C}^{15}\mbox{-}$bundle over $X_A\cong\mathbf{P}^1.$ \\
The space $F_B$ is a $\mathbf{C}^{15}\mbox{-}$bundle over $X_B\cong\mathbf{P}^2\backslash\{pt\}.$\\
The space $F_C$ is a $\mathbf{C}^{13}\times\mathring{\Delta}_1\mbox{-}$bundle over $X_C\cong B(\mathbf{P}^1,2).$ \\
The space $F_D$ is a $\mathbf{C}^{12}\times\mathring{\Delta}_1\mbox{-}$bundle over $X_D\cong\mathbf{P}^1\times\mathbf{P}^2\backslash\{pt\}.$\\
The space $F_E$ is a $\mathbf{C}^{12}\times\mathring{\Delta}_1\mbox{-}$bundle over $X_E\cong B(\mathbf{P}^2\backslash\{pt\},2).$\\
The space $F_F$ is a $\mathbf{C}^{10}\times\mathring{\Delta}_2\mbox{-}$bundle over $X_F\cong\mathbf{P}^2\backslash\{pt\}\times B(\mathbf{P}^1,2).$\\
The space $F_G$ is a $\mathbf{C}^{9}\times\mathring{\Delta}_2\mbox{-}$bundle over $X_G\cong B(\mathbf{P}^2\backslash\{pt\},2)\times\mathbf{P}^1.$\\
The space $F_H$ is a $\mathbf{C}^{7}\times\mathring{\Delta}_3\mbox{-}$bundle over $X_H\cong B(\mathbf{P}^2\backslash\{pt\},2)\times B(\mathbf{P}^1,2).$\\

\subsection{Column (I)+(J)}
Each configuration in $X_I$ is defined as the singular loci of the blow up at $P$ of a plane quintic defined by two reduced conics tangent at $P$ and a line meeting the conics at four distinct points, as in Figure 1.\\

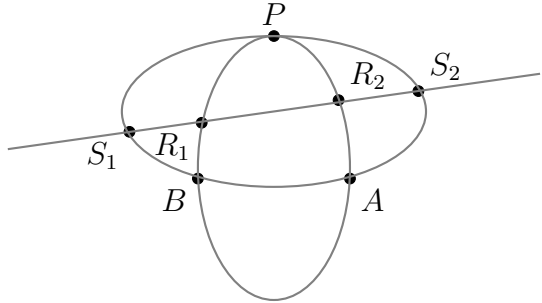
\begin{figure}[H]\centering
	\begin{tikzpicture}
	\filldraw[black] (13,1) circle (2pt) node[anchor=south] {$P$};
	\filldraw[black] (14,-0.89) circle (2pt) node[anchor=north west] {$A$};
	\filldraw[black] (12,-0.89) circle (2pt) node[anchor=north east] {$B$};
	\filldraw[black] (11.1,-0.27) circle (2pt) node[anchor=north east] {$S_1$};
	\filldraw[black] (12.05,-0.15) circle (2pt) node[anchor=north east] {$R_1$};
	\filldraw[black] (13.85,0.15) circle (2pt) node[anchor=south west] {$R_2$};
	\filldraw[black] (14.9,0.27) circle (2pt) node[anchor=south west] {$S_2$};
	\draw[gray, thick] (9.5,-0.5) -- (16.5,0.5);
	
	\draw[gray, thick] (13,0) ellipse (2 and 1);
	\draw[gray, thick] (13,-0.75) ellipse (1 and 1.75);
	\end{tikzpicture} 
	\caption{configuration of type (I)}
	\label{fig:config1}
\end{figure}

On the other hand, configurations of type (J) arise from blowing up $P,$ where $P$ is now one of the four points of intersection between two reduced conics that, together with a line not meeting the conics at any of the points of intersection, define the plane projective quintic curve in Figure 2.
\begin{figure}[H]\centering
	\begin{tikzpicture}
	
	\filldraw[black] (13.85,0.9) circle (2pt) node[anchor=south west] {$B$};
	\filldraw[black] (12.15,0.9) circle (2pt) node[anchor=south east] {$A$};
	\filldraw[black] (13.85,-0.9) circle (2pt) node[anchor=north west] {$C$};
	\filldraw[black] (12.15,-0.9) circle (2pt) node[anchor=north east] {$P$};
	\filldraw[black] (14,0.16) circle (2pt) node[anchor=south west] {$R_2$};
	\filldraw[black] (14.9,0.28) circle (2pt) node[anchor=south west] {$S_2$};
	\filldraw[black] (12,-0.16) circle (2pt) node[anchor=north east] {$R_1$};
	\filldraw[black] (11.1,-0.28) circle (2pt) node[anchor=north east] {$S_1$};
	\draw[gray, thick] (9.5,-0.5) -- (16.5,0.5);
	
	\draw[gray, thick] (13,0) ellipse (2 and 1);
	\draw[gray, thick] (13,0) ellipse (1 and 1.75);
	\end{tikzpicture}                                                      	\caption{configuration of type (J)}
	\label{fig:config2}
\end{figure}
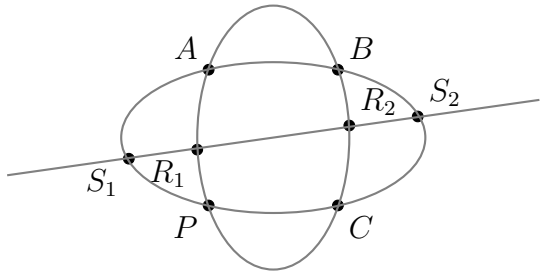

By noticing that the configuration space $X_I$ is contained in the closure of $X_J$ (by allowing one of the points $A,B,C$ to lie on the exceptional divisor $E$ of $\mathbb{F}_1$) we can consider a bigger configuration space containing both of them, which we will denote by $X_{I+J}.$\\
$X_{I+J}$ is the space of configurations of 7 singular points $A,B,C,R_1,R_2,S_1,S_2,$ such that $A,B,C$ are points that are in general position, also with respect to $P$, where only one of them is allowed to lie on $E$ coincide with $P,$ and $R_1,R_2,S_1,S_2$ are four distinct points of intersection between two distinct reduced conics passing through $A,B,C,P$ and a line $l$ not passing through any of these points.\\
We can fiber $X_{I+J}$ over the space parametrizing the points $A,B,C$ and the choice of the line:
$$X_{I+J}\rightarrow B:=\{(\{A,B,C\},l): A,B,C,l\text{ as in the description above}\}.$$

The fiber of this map, which we denote by $\mathcal{Z},$ will then be the space of pairs of conics passing through the four points and not tangential to the line. Note that $\mathcal{Z}$ is exactly the same fiber space considered in \cite[Section 4.2]{gorinov2005real} in Column 38.\\
As both conics have to satisfy 4 linear independent conditions (that consist either in the passage through 4 distinct points or 3 points plus the tangency condition), each of them is uniquely determined by a point on the line $l,$ and we will denote these points by $S_1,R_1$ as in the figures. 
Recall that there are exactly two conics in the pencil with base locus $A+B+C+P$ that are tangent to $l$. Let $T_1, T_2$ be the points of intersection between $l$ and the two tangent conics. Any other conic will meet $l$ at two distinct points. Exchanging these two intersection points defines an involution on $l\cong\mathbf{P}^1$ that fixes $T_1,T_2,$ and by choosing an appropriate coordinate system we can assume that $T_1=\left[1,0\right]$,  $T_2=\left[0,1\right]$ and this involution will be $\left[1,t\right]\mapsto\left[1,-t\right].$\\
Therefore we can set $S_1=\left[1,t\right]$, $S_2=\left[1,-t\right]$, $R_1=\left[1,s\right]$, $R_2=\left[1,-s\right]$ and the space $\mathcal{Z}$ parametrizing the two conics of the configuration will be a quotient of $$(t,s)\in\tilde{\mathcal{Z}}:=\mathbf{C}^2\backslash(\{t=0\}\cup\{s=0\}\cup\{s=t\}\cup\{s=-t\}).$$
We note that $\{t=0\}\cup\{s=0\}\cup\{s=t\}\cup\{s=-t\}$ is the disjoint union of four copies of $\mathbf{C}^*$ and one point, so we have that the Borel-Moore homology of $\tilde{\mathcal{Z}}$ is $\bar{H}_4(\tilde{\mathcal{Z}})=\mathbf{Q}(2),$ $\bar{H}_3(\tilde{\mathcal{Z}})=4\mathbf{Q}(1),$ $\bar{H}_2(\tilde{\mathcal{Z}})=3\mathbf{Q}$ and $\bar{H}_q(\tilde{\mathcal{Z}})=0$ for all $q\leq 0$ or $q\geq 5.$\\

To get the Borel-Moore homology of $\mathcal{Z},$ we need to consider first the following involutions of $\tilde{\mathcal{Z}}:$ 
\begin{itemize}
	\item[$i:$]$(t,s)\mapsto(s,t)$ exchanges the two points $S_1$ and $R_1$, hence the two conics;
	\item[$j:$]$(t,s)\mapsto(\frac{1}{t},\frac{1}{s})$ exchanges $0$ and $\infty$, therefore it acts as the involution on $l$ that exchanges the two tangency points;
	\item[$k:$]$(t,s)\mapsto(t,-s)$ exchanges $R_1$ with $R_2.$ (Note that $k$ has the same action on homology as $k':(t,s)\mapsto(-t,s)$ so we can consider only one of them).
\end{itemize}

By studying the action of $i,j,k$ on the stratification of $\mathbf{C}^2\backslash\tilde{\mathcal{Z}}$ into four copies of $\mathbf{C}^*$ and a point, we obtain
\begin{lem} The action of $i,j,k$ on the Borel-Moore homology classes of $\tilde{\mathcal{Z}}$ is as given in Table 1.\end{lem}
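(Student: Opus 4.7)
The plan is to use the open-closed localization long exact sequence in Borel--Moore homology for the inclusion $\tilde{\mathcal{Z}}\hookrightarrow\mathbf{C}^2$, whose complement $Y$ is the union of the four lines $L_1=\{t=0\}$, $L_2=\{s=0\}$, $L_3=\{s=t\}$, $L_4=\{s=-t\}$, stratified into four copies of $\mathbf{C}^*$ plus the origin. Iterating the sequence identifies natural generators of $\bar{H}_{\bullet}(\tilde{\mathcal{Z}})$: the fundamental class in degree $4$; a ``tube'' class $\gamma_i$ around each removed line $L_i$ in degree $3$; and in degree $2$, three classes arising as differences $\delta_i-\delta_j$, where the $\delta_i$ are the images of the fundamental classes of $L_i\smallsetminus\{0\}\cong\mathbf{C}^*$ under the iterated connecting map at the origin (whose sum vanishes because all four lines meet there).

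For the two involutions $i$ and $k$, which extend to linear automorphisms of $\mathbf{C}^2$ preserving $Y$, the computation is a matter of tracking permutations of the strata. The involution $i$ swaps $L_1\leftrightarrow L_2$ and preserves $L_3$ and $L_4$ setwise, while $k$ preserves $L_1$ and $L_2$ setwise and swaps $L_3\leftrightarrow L_4$. Because every involution in sight is holomorphic, its action on any line it preserves setwise preserves the complex orientation of both the base and the normal, so the induced action on each tube class $\gamma_i$ (and correspondingly on each $\delta_i$) carries no extra sign. Naturality of the localization sequence then transfers these permutations of the $\gamma_i$ and $\delta_i$ directly to the action on $\bar{H}_{\bullet}(\tilde{\mathcal{Z}})$.

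The involution $j=(1/t,1/s)$ does not extend to $\mathbf{C}^2$, so for it I would pass to the K\"unneth description $\tilde{\mathcal{Z}}\cong\mathbf{C}^*_t\times(\mathbf{C}^*_u\smallsetminus\{\pm1\})$ via $u=s/t$, in which $j$ becomes the diagonal map $(t,u)\mapsto(1/t,1/u)$. The action on the first factor is then given directly by Lemma~2.2. The subtlety is on the second factor: $u\mapsto 1/u$ fixes the punctures $\pm 1$ but swaps the puncture at $0$ with the puncture at $\infty$, so in the loop basis $\alpha_0,\alpha_1,\alpha_{-1}$ of $\bar{H}_1(\mathbf{C}^*\smallsetminus\{\pm 1\})$ the class $\alpha_0$ is sent to $\alpha_\infty=-(\alpha_0+\alpha_1+\alpha_{-1})$ while $\alpha_{\pm 1}$ are fixed; diagonalizing this $3\times 3$ matrix produces a $2$-dimensional invariant and a $1$-dimensional anti-invariant subspace. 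Combining these two factor-wise actions via K\"unneth yields the $j$-columns of Table~1.

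The main obstacle will be this last piece, namely correctly resolving the signs and the non-diagonal behaviour of $j$ on the second K\"unneth factor. A clean cross-check is to compute the dual action of $u\mapsto 1/u$ on the cohomology generators $du/u$, $du/(u-1)$, $du/(u+1)$ of $H^1(\mathbf{C}\smallsetminus\{0,\pm 1\};\mathbf{Q})$ by direct differentiation, which reproduces the same invariant/anti-invariant decomposition after dualization via Poincar\'e--Lefschetz duality, confirming the entries of the table.
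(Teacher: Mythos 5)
Your proposal is correct in substance and, for the involutions $i$ and $k$, it is exactly the paper's (one-line) argument made explicit: the paper simply says the table is obtained ``by studying the action of $i,j,k$ on the stratification of $\mathbf{C}^2\backslash\tilde{\mathcal{Z}}$ into four copies of $\mathbf{C}^*$ and a point,'' which is your localization sequence with the tube classes $\gamma_1,\dots,\gamma_4$ in degree $3$ and the difference classes in degree $2$. Where you genuinely depart from (and improve on) the paper is the treatment of $j$: since $(t,s)\mapsto(1/t,1/s)$ does not extend to $\mathbf{C}^2$, the paper's stated recipe does not literally apply to it, and your substitution $u=s/t$, which turns $j$ into the product map $(t,u)\mapsto(1/t,1/u)$ on $\mathbf{C}^*\times(\mathbf{C}^*\smallsetminus\{\pm1\})$ and reduces the first factor to Lemma~2.2, is a clean way to fill that gap; I checked that the resulting $j$-eigenvalues ($+$ in degree $4$; $+,+,-,-$ in degree $3$; $-,-,+$ in degree $2$) match Table~1. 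Two loose ends you should tighten. First, your parenthetical ``whose sum vanishes'' is not right as stated: the four classes $\delta_i\in\bar H_1(L_i\smallsetminus\{0\})$ do not sum to zero; rather, each maps to the point class under the connecting map, so it is precisely the combinations with coefficient sum zero (spanned by the differences $\delta_i-\delta_j$) that give $\bar H_1(Y)\cong\bar H_2(\tilde{\mathcal{Z}})$. Second, because Table~1 records \emph{simultaneous} eigenvalues, you must reconcile the two bases you use: on the $2$-dimensional $(i,k)=(+,+)$ eigenspace $\langle\gamma_1+\gamma_2,\gamma_3+\gamma_4\rangle$ in degree $3$ the action of $j$ is not determined until you identify the K\"unneth generators with the tube classes (e.g.\ $[\mathbf{C}^*_t]\otimes\alpha_{\pm1}$ with $\gamma_3,\gamma_4$, the classes around $s=\pm t$); this is routine but it is the step where the off-diagonal behaviour of $\alpha_0\mapsto\alpha_\infty$ actually enters, so it should be written out.
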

\begin{table}[H]\caption{}\footnotesize\label{table1}\centering
	\begin{tabular}{|c|c|c|c|}
		\hline 
		&$i$&$j$&$k$\\
		\hline
		degree 4&$+$&$+$ &$+$\\
		\hline
		degree 3&$+$&$+$&$+$\\
		&$+$&$+$&$-$\\
		&$+$&$-$&$+$\\
		&$-$&$-$&$+$\\
		\hline
		degree 2&$+$&$-$&$+$\\
		&$+$&$-$&$-$\\
		&$-$&$+$&$+$\\
		\hline
	\end{tabular}
\end{table}

Recall that by exchanging the two conics we are actually exchanging the points $R_1$ with $S_1,$ and $R_2$ with $S_2.$ So, we have to consider the invariant classes with respect to the involution $i.$ On the other hand, if we exchange $R_1$ with $R_2$, the two conics are not necessarily swapped. So, we also require the classes to be anti-invariant with respect to the action of $k.$

Thus, we get the Borel-Moore homology of $\mathcal{Z}$ and the spectral sequence of the bundle $X_{I+J}\rightarrow B$ will have two rows:
\begin{itemize}
	\item[] in degree 3: defined by the Borel-Moore homology of $B$ with constant coefficients;
	\item[] in degree 2: defined by the Borel-Moore homology of $B$ with non-constant coefficients $\mathcal{J}$.
	
\end{itemize}

To compute the Borel-Moore homology of the base space $B,$ we consider a covering $\tilde{B},$ where the points $A,B,C$ are ordered. Thus, there is a natural action of the symmetric group $\mathfrak{S}_3$ on $\tilde{B}$ and we can recover the Borel-Moore homology of $B$ by taking the $\mathfrak{S}_3\mbox{-}$ anti-invariant classes of the Borel-Moore homology of $\tilde{B}.$ \\

\begin{itemize}
	\item[$\bar{H}_\bullet(\tilde{B};\mathbf{Q}):$] Note that $\tilde{B}$ can be thought of as a fiber space over the space parametrizing three lines through $P,$ and the line $l,$ not passing through $P,$ that is $F(\mathbf{P}^1,3)\times \mathbf{C}^2.$ Denote by $r_A, r_B, r_C $ the three lines containing the points $A,B,C,$ respectively,
	After an appropriate change of coordinates, we may assume
	$$r_A: x_2=0,\qquad r_B:x_1=0,\qquad r_C:x_1-x_2=0,\qquad l:x=0.$$
	Then, the fiber of $\tilde{B}$ over $(r_A,r_B,r_C,l)$ is the space parametrizing the points $A,B,C$ and can be identified with a subset in $\mathbf{C}^3$: the point $(u,v,w)\in\mathbf{C}^3$ corresponds to the choices
	$$A=\left[1,u,0\right],\qquad B=\left[1,0,v\right],\qquad C=\left[1,w,w\right].$$
	We need then to remove the locus where the three points are collinear, which is a quadric cone of equation $uw+vw-uv=0.$\\
	Thus, $\bar{H}_\bullet(\tilde{B};\mathbf{Q})$ is invariant with respect to the involution $u\leftrightarrow v,$ and by noticing that this involution corresponds to the exchange of a couple of points among $A,B,C,$ it is also invariant with respect to the $\mathfrak{S}_3\mbox{-}$action.
	
	\item[$\bar{H}_\bullet(\tilde{B};\mathcal{J}):$] In order to compute the Borel-Moore homology of $\tilde{B}$ with non-constant coefficients, we will consider the subsets 
	$$\tilde{B}_J\overset{\text{open}}{\subseteq}\tilde{B}\qquad\text{and}\qquad\tilde{B}_I\overset{\text{closed}}{\subseteq}\tilde{B},$$
	where 
	$$\tilde{B}_J=\{((A,B,C),l)\in \tilde{B}|A,B,C\neq P\}$$
	defines configurations of type $(J),$ and 
	
	$$\tilde{B}_I=\{((A,B,C),l)\in\tilde{B}|\text{ one of $A,B,C$ is equal to $P$}\}$$
	defines those of type $(I).$ \\
	By projecting onto the triples $(A,B,C),$
	$$\tilde{B}_J\xrightarrow{Y_J} \{(A,B,C)|P\notin \overline{AB},\overline{BC},\overline{AC}; A,B,C\text{ not collinear}\},$$ where $Y_J\cong\mathbf{P}^2\backslash \{\text{four lines in general position}\},$ and by studying the preimage of $Y_J$ in the double cover of $\mathbf{P}^2$ ramified along four lines we notice that there is one only class in its Borel-Moore homology with non-constant coefficients, that is $\mathbf{Q}(1)$ in degree 2, and it is $\mathfrak{S}_3\mbox{-}$invariant.\\
	Similarly,	
	$$\tilde{B}_I\xrightarrow{Y_I} \{(A,B,C)|\text{ one of $A,B,C$ belongs to $E$}\},$$ where $Y_I\cong\mathbf{P}^2\backslash \{\text{three lines in general position, one with mult. 2}\}.$ \\
	Here, because of the double component, the covering is not normal, and its normalization is the double cover of $\mathbf{P}^2,$ ramified over the two simple lines. The fiber $Y_I$ and its double cover have the same homology with rational coefficients, thus the Borel-Moore homology with non-constant coefficients is trivial.\\
	Finally, by considering the Gysin exact sequence associated to inclusions	
	
	$$\tilde{B}_I\overset{\text{closed}}{\hookrightarrow}\tilde{B}\overset{\text{open}}\hookleftarrow \tilde{B}_J$$
	we get that $\bar{H}_\bullet(\tilde{B};\mathcal{J})$ also has no $\mathfrak{S}_3\mbox{-}$anti-invariant classes.
\end{itemize}
Since both homologies have only $\mathfrak{S}_3\mbox{-}$invariant classes, the Borel-Moore homology of $B$, both with constant and non-constant coefficients, will be trivial. Therefore we can conclude that the whole configuration space, and consequently $F_{I+J},$ has trivial Borel-Moore homologies.

\subsection{Column (L)}
Configurations of type $(L)$ are the singular loci of the blow up at the fixed point $P$ of a conic tangent to a line at $P$ and two other lines in the projective plane. Note that we can assume that the conic is irreducible, since we have already considered the reducible case that is config 76. If we define the space of configurations of the same type with the only exception that we let $P$ free in $\mathbf{P}^2:$ $\mathcal{L}:=\{(P,f)\in\mathbf{P}^2\times \Sigma| f\text{ has a node in }P$ and its singular points define a configuration of type $L \}$, where $\operatorname{dim}\mathcal{L}=20,$ then we can consider the space $X_L$ as the fiber of the bundle
\begin{align*}
\mathcal{L}&\rightarrow\mathbf{P}^2\\
(P,f)&\mapsto P.
\end{align*}
Let's consider such a configuration: in the projective plane, this is defined by the point $P$, and the intersection point of a conic $\mathcal{C}$ through $P$, its tangent at $P$, and 2 general lines $r,s$, not meeting at $P$ or any other point of the conic. 
We denote by $E_i, i=1,\dots,4$ the four points of intersection of the two lines and the conic, and $A,B$ the intersection points with the tangent line to the conic and we label the points as in the following figure.\\

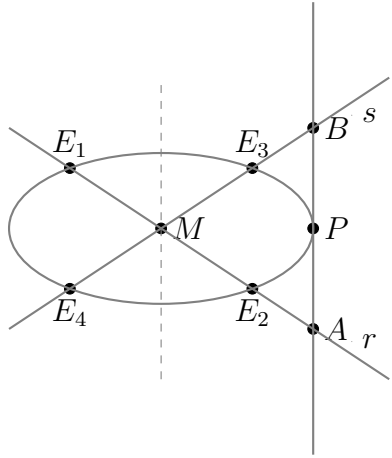
\begin{figure}[H]\centering
\begin{tikzpicture}
\filldraw[black] (6,0) circle (2pt) node[anchor=west] {$M$};
\filldraw[black] (8,0) circle (2pt) node[anchor=west] {$P$};
\filldraw[black] (8,1.3333) circle (2pt) node[anchor=west] {$B$};
\filldraw[black] (8,-1.3333) circle (2pt) node[anchor=west] {$A$};
\filldraw[black] (4.8,0.8) circle (2pt) node[anchor=south] {$E_1$};
\filldraw[black] (7.2,0.8) circle (2pt) node[anchor=south] {$E_3$};
\filldraw[black] (4.8,-0.8) circle (2pt) node[anchor=north] {$E_4$};
\filldraw[black] (7.2,-0.8) circle (2pt) node[anchor=north] {$E_2$};
\filldraw[] (8.5,1.5) circle (0pt) node[anchor=west] {$s$};
\filldraw[] (8.5,-1.5) circle (0pt) node[anchor=west] {$r$};
\draw[gray, thick] (4,1.3333) -- (9,-2);
\draw[gray, thick] (4,-1.3333) -- (9,2);
\draw[gray, thick] (8,-3) -- (8,3);
\draw[gray, thick] (6,0) ellipse (2 and 1);
\draw[gray, dashed] (6,-2) -- (6,2);
\end{tikzpicture}
\caption{configuration of type (L)}
\label{fig:config4}
\end{figure}

Up to projective transformations, we may assume the $E_i$ to be the projective frame of $\mathbf{P}^2:$ $E_1=\left[1,0,0\right],$ $E_2=\left[0,1,0\right],$ $E_3=\left[0,0,1\right],$ and $E_4=\left[1,1,1\right].$ Then we can consider another fiber bundle
$$\mathcal{L}\xrightarrow{PGL(3)} Y,$$
where $Y:=\{(P,A,B)|P\in \mathbf{P}^2\backslash \bigcup\overline{E_iE_j}; \{A,B\}=\mathcal{T}_P\mathcal{C}\cap(r\cup s)\}.$\\
Note that, once we have fixed the points $P,E_i$, $i=1,\dots,4$ and hence the lines $r,s$ and the conic, the points $A,B$ are uniquely determined. Thus, $Y$ is isomorphic to the space $\mathbf{P}^2\backslash \bigcup\overline{E_iE_j},$ that is isomorphic to the moduli space $\mathcal{M}_{0,5}$ of genus 0 curves with 5 marked points, since, for $n\geq3,$
$$\mathcal{M}_{0,n}=\{(t_0,\dots,t_{n-3})\in(\mathbf{P}^1)^{n-3}|t_i\neq 0,1,\infty,\text{ and }t_i\neq t_j\}.$$
By the equivariant Hodge Euler characteristic of $\mathcal{M}_{0,5}$ that is computed in \cite{getzler1995operads}, we have that the Borel-Moore homology of $\mathcal{M}_{0,5}$ is generated by the following classes:
\begin{itemize}
\item[]$\mathbf{Q}(2)\otimes\mathbf{S}_5$ in degree 4;
\item[]$\mathbf{Q}(1)\otimes\mathbf{S}_{3,2}$ in degree 3;
\item[]$\mathbf{Q}\otimes\mathbf{S}_{3,1^2}$ in degree 2;
\end{itemize}
where  by $-\otimes\mathbf{S}_{\lambda}$ we mean that we are considering the local system of coefficient corresponding to the irreducible representation of $\mathfrak{S}_5,$ associated to the partition $\lambda$ of 5.\\
On $Y$ there is a natural action of the dihedral group $D_4,$ that is the group of symmetries of a square, defined by the points $E_i,$ so, when computing its Borel-Moore homology, we need to consider local systems of coefficients defined by the action of $D_4,$ that can be embedded in the symmetric group $\mathfrak{S}_4$ by sending each symmetry to the corresponding permutation of vertices.
Restricting to $\mathfrak{S}_4$, we get the following representations:
\begin{align}\label{reps}
&\mathbf{S}_5\rightarrow\mathbf{S}_4\nonumber\\
&\mathbf{S}_{3,2}\rightarrow\mathbf{S}_{3,1}\oplus\mathbf{S}_{2,2}\\
&\mathbf{S}_{3,1^2}\rightarrow\mathbf{S}_{3,1}\oplus\mathbf{S}_{2,1^2}.\nonumber\end{align}
We then consider the character table of $D_4$, plus the lines of the character table of $\mathfrak{S}_4$ corresponding to the irreducible representations in \eqref{reps}, that can be found in \cite{Ser}:

\begin{center}
\begin{tabular}{ c|cccccc } 
	& $e$& (12)(34)&(1324)&(12) &(13)(24) &\\
	\hline
	$\psi_1$&1 &1 &1&1 &1 & \\
	$\psi_2$&1 &1 &1&-1 &-1 & \\
	$\psi_3$&1 &1 &-1&1 & -1& \\
	$\psi_4$&1 &1 &-1&-1 & 1& \\
	$\chi$&2 &-2 &0& 0&0 & \\
	\hline
	$\mathbf{S}_4$&1 &1 &1&1 &1 & =$\psi_1$\\
	$\mathbf{S}_{3,1}$&3 &-1 &-1&1 &-1 & =$\chi+\rho_{2}$\\
	$\mathbf{S}_{2,2}$&2 &2 &0&0 & 2& =$\psi_1+\rho_{3}$\\
	$\mathbf{S}_{2,1^2}$&3 &-1 &1&-1 & -1& =$\chi+\rho_{1}$\\
\end{tabular}
\end{center}

Hence we can write the Borel-Moore homology groups of $\mathcal{M}_{0,5}$ as $D_4\mbox{-}$ representations:
\begin{itemize}
\item[]$\mathbf{Q}(2)\otimes\psi_1$ in degree 4;
\item[]$\mathbf{Q}(1)\otimes(\psi_1+\rho_{2}+\rho_{3}+\chi)$ in degree 3;
\item[]$\mathbf{Q}\otimes(\rho_{1}+\rho_{2}+\chi^{\oplus2})$ in degree 2.
\end{itemize}
and we only need to consider the term involving the representation that corresponds to a local system of coefficients obtained by the restriction of $\pm\mathbf{Q}$ on $\pi_1(B(\mathbf{P}^2,8))=\mathfrak{S}_8$ to the fundamental group of our configuration space, represented in Figure \ref{fig:config4}. 
As we noticed before, the latter group is $D_4\subset \mathfrak{S}_8$ since it has to fix the points $P,M$ and the points $A,B$ are uniquely determined by the choice of the points $E_i$, whose permutations define the symmetric group $\mathfrak{S}_4\subset \mathfrak{S}_8.$ Because we also require $E_1,E_2\in r$ and $E_3,E_4\in s$ we get indeed $D_4.$ So the local system we are looking for is the restriction of the sign representation of $\mathfrak{S}_8$ to $D_4$ whose trace can be computed as follows.


\begin{itemize}
\item[$e$:] Clearly the identity will be mapped to +1;
\item[$(12)(34)$:] the element $(12)(34)$ acts by exchanging the two points on each of the two lines: $E_1\leftrightarrow E_2,$ $E_3\leftrightarrow E_4$ and thus will give a +1;
\item[$(1324):$] the element $(1324)$ corresponds to a rotation by $\pi/2$ of the $E_i$ that is an odd permutation of the $E_i$, but it also interchanges the two lines and hence the points $A,B,$ giving a +1;
\item[$(12):$] the element $(12)$ is the transposition of two points on the same line, moving no other point, so it will be mapped to -1;
\item[$(13)(24):$] finally, the element $(13)(24)$ corresponds to the symmetry with respect to the dashed line, that is an even permutation. This interchanges again the two lines, and hence $A,B$, giving -1.
\end{itemize}
By comparing this to the character table of $D_4,$ we get that the local system we want to consider is the one defined by the representation $\psi_2,$ that we will denote by $W$. Hence the Borel-Moore homology of $Y$ with coefficients in $W$ is $\mathbf{Q}$ in degree 2, and we can compute the Borel-Moore homology of $\mathcal{L}$ just by tensoring with the one of $PGL(3),$ that we can compute by duality from its cohomology: $\bar{H}_{16}(PGL(3,\mathbf{Q}))=\mathbf{Q}(8),$ $\bar{H}_{13}(PGL(3,\mathbf{Q}))=\mathbf{Q}(6),$ $\bar{H}_{11}(PGL(3,\mathbf{Q}))=\mathbf{Q}(5)$ and $\bar{H}_{8}(PGL(3,\mathbf{Q}))=\mathbf{Q}(3).$\\
Therefore, 
$\bar{H}_{18}(\mathcal{L};\mathbf{Q})=\mathbf{Q}(8),$ $\bar{H}_{15}(\mathcal{L};\mathbf{Q})=\mathbf{Q}(6),$ $\bar{H}_{13}(\mathcal{L};\mathbf{Q})=\mathbf{Q}(5),$ $\bar{H}_{10}(\mathcal{L};\mathbf{Q})=\mathbf{Q}(3),$ and it is zero in all the others degree.\\
Finally we compute the Borel-Moore homology of $X_L$ from the fibration
$$\mathcal{L}\rightarrow\mathbf{P}^2.$$
Since $\mathbf{P}^2$ is simply connected, there is a first quadrant spectral sequence
$$E^2_{p,q}=\bar{H}^p(\mathbf{P}^2)\otimes\bar{H}^q(X_L)\Rightarrow \bar{H}^{p+q}(\mathcal{L};\mathbf{Q}),$$

\begin{center}
\begin{tabular}{ c|ccccc } 
	14&$\mathbf{Q}(6)$\tikzmark{d} & &$\mathbf{Q}(7)$\tikzmark{b}& &$\mathbf{Q}(8)$  \\
	13&$\mathbf{Q}(5 )$ & &\tikzmark{c}$\mathbf{Q}(6)$& &\tikzmark{a}$\mathbf{Q}(7)$  \\
	12& & && & \\
	11&$\mathbf{Q}(4)$\tikzmark{h} & &$\mathbf{Q}(5)$\tikzmark{f}& & $\mathbf{Q}(6)$ \\
	10&$\mathbf{Q}(3)$ & &\tikzmark{g}$\mathbf{Q}(4)$& & \tikzmark{e}$\mathbf{Q}(5)$\\
	9& & & & & \\
	\hline
	&0&1&2&3&4
\end{tabular}
\begin{tikzpicture}[overlay, remember picture, yshift=.25\baselineskip, shorten >=.5pt, shorten <=.5pt]
\draw [shorten >=.1cm,shorten <=.1cm,->]([yshift=5pt]{pic cs:a}) -- ({pic cs:b});
\draw [shorten >=.1cm,shorten <=.1cm,->] ([yshift=5pt]{pic cs:c}) -- ({pic cs:d});
\draw [shorten >=.1cm,shorten <=.1cm,->]([yshift=5pt]{pic cs:e}) -- ({pic cs:f});
\draw [shorten >=.1cm,shorten <=.1cm,->]([yshift=5pt]{pic cs:g}) -- ({pic cs:h});
\end{tikzpicture}
\end{center}
where the differentials $d^2_{2,10}$, $d^2_{4,10}$, $d^2_{2,13}$, $d^2_{4,13}$ must be all isomorphisms in order to obtain the Borel-Moore homology of the total space that we computed above.
Therefore, since the space $F_L$ is a $\mathbf{C}\times\mathring{\Delta}_7\mbox{-}$bundle over $X_L,$ the Hodge-Grothendieck polynomial of $\bar{H}_{\bullet}(F_L;\mathbf{Q})$ must be $\mathbf{L}^{-7}t^{23}+\mathbf{L}^{-6}t^{22}+\mathbf{L}^{-5}t^{20}+\mathbf{L}^{-4}t^{19}.$

\subsection{Column (M)}
As a consequence of \cite[Lemma 2.10]{gorinov2005real}, $F_M$ is an open cone and its Borel-Moore homology can be obtained from the spectral sequence in Table \ref{table2}, whose columns coincide with those of the main spectral sequence, shifted by twice the dimension of the complex vector bundle that defines each column.\\
\begin{table}[H]\caption{}\small\label{table2}
\begin{tabular}{ c|ccccccccc} 
	12& & & & & &&&&$\mathbf{Q}(6)$\\
	11& & & & & &&&&$\mathbf{Q}(5)$\\
	10& & & & & &&&&\\
	9& & && & & &&&$\mathbf{Q}(4)$\\
	8& & && & & &&& $\mathbf{Q}(3)$\\
	7& & && & & &&&\\
	6& & && &&&&&\\
	5& & &&&&& & & \\
	4&&&& & && & & \\
	3& & &&$\mathbf{Q}(3)$ & & &$\mathbf{Q}(4)$\tikzmark{55}&\tikzmark{5}$\mathbf{Q}(4)$&\\
	2& &$\mathbf{Q}(2)$ && &$\mathbf{Q}(3)$ &$\mathbf{Q}(3)$ &&&\\
	1&$\mathbf{Q}(1)$ & &&$\mathbf{Q}(2)^2$ & & &$\mathbf{Q}(3)$&&\\
	0& &$\mathbf{Q}(1)$&$\mathbf{Q}(1)$& & &$\mathbf{Q}(2)$ &&&\\
	-1&$\mathbf{Q}$ & &&$\mathbf{Q}(1)$ & & &&&\\
	\hline
	&A&B&C&D&E&F&G&H&L\\
\end{tabular}
\begin{tikzpicture}[overlay, remember picture, yshift=.25\baselineskip, shorten >=.5pt, shorten <=.5pt]
\draw [shorten <=.1cm,->]([yshift=3pt]{pic cs:5}) -- ([yshift=3pt]{pic cs:55});
\end{tikzpicture}
\end{table}
We recall that $H^{\bullet}(V\backslash\Sigma)\cong\bar{H}_{35-\bullet}(\Sigma)$ and $V\backslash\Sigma$ is affine of dimension 18. Hence, for dimensional reasons, the differential $d^1_{H,3}:E^1_{H,3}\rightarrow E^1_{G,3}$ is non-trivial, and in the second page of the spectral sequence all differentials in the columns $(A)-(G)$ are isomorphisms.\\
We can then conclude that the Hodge-Grothendieck polynomial of $\bar{H}_{\bullet}(F_M,\mathbf{Q})$ is $\mathbf{L}^{-6}t^{22}+\mathbf{L}^{-5}t^{21}+\mathbf{L}^{-4}t^{19}+\mathbf{L}^{-3}t^{18}.$

\subsection{Spectral sequence}
The first page of the spectral sequence converging to $\bar{H}_{\bullet}(\Sigma,\mathbf{Q})$ is given in Table \ref{table3}. \\

	
\begin{table}[h]\caption{}\scriptsize\label{table3}	
	\begin{tabular}{ c|cccccccccc } 
		32& & $\mathbf{Q}(17)$&& & & &&&&\\
		31&$\mathbf{Q}(16)$ & && & & &&&&\\
		30& & $\mathbf{Q}(16)$&& & & &&&&\\
		29&$\mathbf{Q}(15)$ & && & & &&&&\\
		28& & && & & &&&&\\
		27& & && $\mathbf{Q}(15)$& & &&&&\\
		26& & &$\mathbf{Q}(14)$& & $\mathbf{Q}(15)$& &&&&\\
		25& & &&$\mathbf{Q}(14)^2$ & & &&&&\\
		24& & && & & &&&&\\
		23& & &&$\mathbf{Q}(13)$ & & &&&&\\
		22& & && & & $\mathbf{Q}(13)$&&&&\\
		21& & && & & &$\mathbf{Q}(13)$&&&\\
		20& & && & & $\mathbf{Q}(12)$&&&&\\
		19& & && & & &$\mathbf{Q}(12)$&&&\\
		18& & && & & &&&&\\
		17& & && & & &&$\mathbf{Q}(11)$&&\\
		16& & && & & &&&&\\
		15& & && & & &&&&\\
		14& & && &  &&&&$\mathbf{Q}(7)$&\\
		13& & && &  &&&&$\mathbf{Q}(6)$&\\
		12& & && &  &&&&&$\mathbf{Q}(6)$\\
		11& && & & &&&&$\mathbf{Q}(5)$&$\mathbf{Q}(5)$\\
		10& & & & &&&&&$\mathbf{Q}(4)$&\\
		9& &  & & &&&&&&$\mathbf{Q}(4)$\\
		8& &  & & &&&&&&$\mathbf{Q}(3)$\\
		\hline
		&A&B&C&D&E&F&G&H&L&M
	\end{tabular}
\end{table}

Following from Section 3.1, the cohomology of $X$ must contain a copy of the cohomology of $GL(2,\mathbf{C}).$ Applying then the isomorphism induced by the cap product with the fundamental class of the discriminant
$$\tilde{H}^{\bullet}(X;\mathbf{Q})\cong\bar{H}_{35-\bullet}(\Sigma;\mathbf{Q})(-d)$$
we compute the whole cohomology of $X$ and that of $X/GL(2,\mathbf{C}),$ whose Hodge-Grothendieck polynomial is $\mathbf{L}^{12}t^{13}+\mathbf{L}^{11}t^{12}+\mathbf{L}^4t^{6}+\mathbf{L}^3t^{5}+\mathbf{L}^2t^{3}+1.$

We finally consider the fibration
$$X/GL(2,\mathbf{C})\rightarrow X/(\mathbf{C}^*\times GL(2,\mathbf{C})).$$
There is a first quadrant cohomology spectral sequence starting with $E_2$ and converging to $H^{\bullet}(X/GL(2,\mathbf{C});\mathbf{Q})$:
$$E_2^{p,q}=H^p(X/(\mathbf{C}^*\times GL(2,\mathbf{C}));H^q(\mathbf{C}^*;\mathbf{Q}))\Rightarrow H^{p+q}(X/GL(2,\mathbf{C});\mathbf{Q})$$
and, because we know the cohomology of the total space and of the fibre, we can compute the cohomology of the base space from the second page of the spectral sequence represented in Table \ref{table4},

\begin{table}[h]\caption{}\footnotesize\label{table4}
\begin{tabular}{ c|ccccccccccccc} 
	1&$\mathbf{Q}(-1)$\tikzmark{w} &&$\mathbf{Q}(-2)$&&&$\mathbf{Q}(-4)$&&&&&&&$\mathbf{Q}(-12)$\\
	0&$\mathbf{Q}$ &&\tikzmark{ww}$\mathbf{Q}(-1)$&&&$\mathbf{Q}(-3)$&&&&&&&$\mathbf{Q}(-11)$\\
	\hline
	&0&1&2&3&4&5&6&7&8&9&10&11&12
\end{tabular}
\begin{tikzpicture}[overlay, remember picture, yshift=.25\baselineskip, shorten >=.5pt, shorten <=.5pt]
\draw [shorten >=.1cm,shorten <=.1cm,->]([yshift=5pt]{pic cs:w}) -- ([yshift=5pt]{pic cs:ww});
\end{tikzpicture}
\end{table}

where the differential $d_2^{0,1}:E_2^{0,1}\rightarrow E_2^{2,0}$ must be non-trivial since the term $\mathbf{Q}(-1)$ is not appearing in the cohomology of $X/GL(2,\mathbf{C}).$ So, the Hodge-Grothendieck polynomial of the cohomology of the base space, and hence of the moduli space of trigonal curve of genus 5, is $\mathbf{L}^{11}t^{12}+\mathbf{L}^3t^{5}+\mathbf{L}t^{2}+1.$
\clearpage

\appendix
\section{Trivial Configurations}

As we promised in the computation of the spectral sequences outlined in Table 2 and Table 3, we now consider the remaining configurations and prove that they have trivial twisted Borel-Moore homology.\\

\subsection{Configurations (42)-(43)}
Both these configurations are equivalent to the configurations of singularities of a plane quintic that is the union of a conic and a singular cubic. To be more precise, in the first configuration, the two curves meet each other at 6 distinct points and $P$ is any of the points of intersection, while in the second configuration they intersect at the singular point of the cubic, that is $P.$
\begin{figure}[H]\centering
	\begin{tikzpicture}[scale = 0.9]
	
	\filldraw[black] (1.25,0) circle (2pt) node[anchor=west] {$P$};
	\draw (0,-1) .. controls (0.5,-1)  .. (2,1);
	\draw (0,1) .. controls (0.5,1) .. (2,-1);
	\draw[gray, thick] (-0.22,0) ellipse (1.5 and 0.75);
	\begin{scope}
	\clip (-1.5,-1) rectangle (0,1.5);
	\draw (0,0) circle(1);
	\end{scope}

	\filldraw[black] (-5.48,0.38) circle (2pt) node[anchor=west] {$P$};
	\draw (-7,-1) .. controls (-6.5,-1)  .. (-5,1);
	\draw (-7,1) .. controls (-6.5,1) .. (-5,-1);
	\draw[gray, thick] (-6.8,0) ellipse (1.5 and 0.75);
	\begin{scope}
	\clip (-8.5,-1) rectangle (-7,1.5);
	\draw (-7,0) circle(1);
	\end{scope}
	\end{tikzpicture}
\end{figure}
Both configuration spaces can be fibered over the space of conics through $P.$ If we denote the conic by $\mathcal{C},$ the fibers will be respectively equal to $B(\mathcal{C}\backslash\{P\},5)$ and $B(\mathcal{C}\backslash\{P\},4)$, which both have trivial twisted Borel-More homology by Lemma 2.1.

\subsection{Configurations (44)-(45)-(46)}
These configurations are all obtained by blowing up a singular point in the configuration of type 37 in \cite{gorinov2005real}, defined by the intersection points of two lines and a cubic curve in $\mathbf{P}^2$ having one singular point. 
To be more precise, configurations of these types correspond to the blow up at $P$, where $P$ has to be an ordinary double point: it is first defined as the point of intersection between a line and the cubic, then as the point of intersection between the two lines, and finally as the singular point of the cubic. Note that, in the first two configuration spaces, the cubic need not to be irreducible: it can decompose into three concurrent lines or into the union of a conic and a line tangent to it. However, this cannot happen for configuration (46), otherwise $P$ would not be a double ordinary singularity. The two reducible cases define configurations (59) and (55), respectively. Configuration (55) was already considered as configuration ($L$), while configuration (59) will be considered later.\\
\begin{figure}[h!]\centering
	\begin{tikzpicture}[scale = 0.8]
	\filldraw[] (-3.45,1) circle (0pt) node[anchor=west] {$r$};
	\filldraw[] (-3.45,-1) circle (0pt) node[anchor=west] {$s$};
	
	\filldraw[black] (-3.84,0.958) circle (2pt) node[anchor=north] {$P$};
	\draw[gray, thick] (-7.5,-0.5) -- (-2.5,1.5);
	\draw[gray, thick] (-7.5,0.5) -- (-2.5,-1.5);
	\draw (-6,-1) .. controls (-5.5,-1)  .. (-3,2);
	\draw (-6,1) .. controls (-5.5,1) .. (-3,-2);
	\begin{scope}
	\clip (-7.5,-1) rectangle (-6,1.5);
	\draw (-6,0) circle(1);
	\end{scope}
	
	\filldraw[] (2.55,1) circle (0pt) node[anchor=west] {$r$};
	\filldraw[] (2.55,-1) circle (0pt) node[anchor=west] {$s$};
	\filldraw[black] (-0.2,0) circle (2pt) node[anchor=north] {$P$};
	\draw[gray, thick] (-1.5,-0.5) -- (3.5,1.5);
	\draw[gray, thick] (-1.5,0.5) -- (3.5,-1.5);
	\draw (0,-1) .. controls (0.5,-1)  .. (3,2);
	\draw (0,1) .. controls (0.5,1) .. (3,-2);
	\begin{scope}
	\clip (-1.5,-1) rectangle (0,1.5);
	\draw (0,0) circle(1);
	\end{scope}
	\filldraw[] (8.55,1) circle (0pt) node[anchor=west] {$r$};
	\filldraw[] (8.55,-1) circle (0pt) node[anchor=west] {$s$};
	\filldraw[black] (7.31,0) circle (2pt) node[anchor=west] {$P$};
	\draw[gray, thick] (4.5,-0.5) -- (9.5,1.5);
	\draw[gray, thick] (4.5,0.5) -- (9.5,-1.5);
	\draw (6,-1) .. controls (6.5,-1)  .. (9,2);
	\draw (6,1) .. controls (6.5,1) .. (9,-2);
	\begin{scope}
	\clip (4.5,-1) rectangle (6,1.5);
	\draw (6,0) circle(1);
	\end{scope}
	\end{tikzpicture}
\end{figure}

Configuration spaces of type (44), (45), (46) can then all be fibered over the space parametrizing the two lines $r,s$, with fibers isomorphic to the quotient of $B(\mathbf{C}^*,2)\times B(\mathbf{C},3)$, $B(\mathbf{C},3)\times B(\mathbf{C},3)$ and $B(\mathbf{C},3)\times B(\mathbf{C},3)$, respectively, by the involution given by exchanging the two lines. Because the fibers have all trivial twisted Borel-Moore homology, the homology of the configuration spaces will be trivial as well. \\

\subsection{Configuration (49)}
Configurations of type $(49)$ are obtained by the same plane curve considered in those of type $(J),$ where $P$ is defined as the point of intersection of a conic and a line.\\
\begin{figure}[H]\centering
	\begin{tikzpicture}
	\filldraw[black] (13.85,0.9) circle (2pt) node[anchor=south west] {$B$};
	\filldraw[black] (12.15,0.9) circle (2pt) node[anchor=south east] {$A$};
	\filldraw[black] (13.85,-0.9) circle (2pt) node[anchor=north west] {$C$};
	\filldraw[black] (12.15,-0.9) circle (2pt) node[anchor=north east] {$D$};
	\draw[gray, thick] (9.5,-0.5) -- (16.5,0.5);
	\draw[gray, thick] (13,0) ellipse (2 and 1);
	\draw[gray, thick] (13,0) ellipse (1 and 1.75);
	\filldraw[black] (14.95,0.25) circle (2pt) node[anchor=south west] {$P$};
	\filldraw[black] (11.05,-0.257) circle (2pt) node[anchor=north east] {$Q$};
	\end{tikzpicture}
	\label{fig:config3}
\end{figure} Then $X_K$ can be fibered over $\tilde{B}(\mathbf{P}^2\backslash\{P\},4)\ni\{A,B,C,D\}.$ Once these points are fixed, we notice that the conic $\mathcal{C}$ passing also through $P$ is uniquely determined. Therefore the fiber $Y$ is itself a fiber bundle over $L\cong\mathbf{P}^1\backslash\{\text{5 points}\},$ the space of lines not passing through any of the points $A,B,C,D$ and not tangent to $\mathcal{C}$ with fiber $\mathcal{Z}$ defined as the space of conics not tangent to $l\in L$ and different from $\mathcal{C}.$\\
$\mathcal{Z}$ is isomorphic to $\mathbf{P}^1\backslash\{0,1,\infty\}\cong\mathbf{C}\backslash\{0,1\},$ and thus, since determining a conic in $\mathcal{Z}$
is equivalent to choosing a point in $l$ that is different to $P,Q$ and the $2$ points of tangency $T_1,T_2$ in $l,$ $\bar{H}_{\bullet}(\mathcal{Z},\pm\mathbf{Q})$ is $\mathbf{Q}=\left[T_1-T_2\right]$ in degree 1 and 0 in all other degrees. Note also that, when moving $l$ around $A,$ for instance, the points of tangency in $l$ are swapped. Therefore $\pi_1(L)$ acts on $\left[T_1-T_2\right]$ anti-invariantly and the Borel-Moore homology of the fiber $Y$ is defined by that of $L$ with non-trivial coefficient system:
$$\bar{H}_{\bullet}(L;\bar{H}_1(\mathcal{Z}))=\mathbf{Q}, \qquad\text{in degree }0.$$
Finally, we notice that, because we are considering a local system on $L$ that changes its sign under the action of any loop in $\mathbf{P}^1$ around any point removed, then any $\gamma\in\tilde{B}(\mathbf{P}^2\backslash\{P\},4)$ transposing a pair of points must act on the fiber as the  multiplication by -1. Therefore the local system induced by the fiber on $\tilde{B}(\mathbf{P}^2\backslash\{P\},4)$ is $\pm\mathbf{Q}$ and by Lemma 2.3 the twisted Borel-Moore homology of $X_K$ will be trivial.
\subsection{Configurations (50)-(51)} 
In all these configuration spaces $P$ has to be a triple point. More precisely, they are the defined by blowing up the following curves at $P.$  
\begin{figure}[h!]\centering
	\begin{tikzpicture}[scale = 0.8]
	\filldraw[black] (-7,0) circle (2pt) node[anchor=south east] {$P$};
	\draw[gray, thick] (-7.5,-0.2) -- (-3,1.5);
	\draw[gray, thick] (-7.5,0.2) -- (-3,-1.5);
	\draw (-6,-1) .. controls (-5.5,-1)  .. (-3,2);
	\draw (-6,1) .. controls (-5.5,1) .. (-3,-2);
	\begin{scope}
	\clip (-7.5,-1) rectangle (-6,1.5);
	\draw (-6,0) circle(1);
	\end{scope}
	
	\filldraw[black] (1.31,0) circle (2pt) node[anchor=south] {$P$};
	\draw[gray, thick] (-1.5,0) -- (3.5,0);
	\draw[gray, thick] (-1.5,0.5) -- (3.5,-1.5);
	\draw (0,-1) .. controls (0.5,-1)  .. (3,2);
	\draw (0,1) .. controls (0.5,1) .. (3,-2);
	\begin{scope}
	\clip (-1.5,-1) rectangle (0,1.5);
	\draw (0,0) circle(1);
	\end{scope}
	\end{tikzpicture}
\end{figure}

We can fiber the spaces over the space parametrizing the pairs of lines. The fiber spaces will be then isomorphic to a quotient of $B(\mathbf{C},2)\times B(\mathbf{C},2)$ and $\mathbf{C}^*\times B(\mathbf{C},3),$ respectively, and they both have trivial twisted Borel-Moore homology. 

\subsection{Configuration (52)}
As above, $P$ must be again a triple point. In particular, configurations of type (52) are defined by two distinct conics meeting at $P$ and three additional points $A,B,C,$ and a line $l$ through $P,$ not meeting any of $A,B,C.$\\

\begin{figure}[H]\centering
	\begin{tikzpicture}[scale = 0.8]
	\filldraw[black] (6.1,0.88) circle (2pt) node[anchor=south east] {$P$};
	\filldraw[black] (7.9,0.88) circle (2pt) node[anchor=south west] {$A$};
	\filldraw[black] (6.1,-0.88) circle (2pt) node[anchor=north east] {$C$};
	\filldraw[black] (7.9,-0.88) circle (2pt) node[anchor=north west] {$B$};
	\draw[gray, thick] (5.45,2) -- (8,-2);
	
	\draw[gray, thick] (7,0) ellipse (2 and 1);
	\draw[gray, thick] (7,0) ellipse (1 and 2);
	
	\end{tikzpicture}
\end{figure}

Then, the configuration space can be fiber over the space $\tilde{B}(\mathbf{P}^2\backslash\{P\},3)\times (\mathbf{P}^1\backslash\{\text{3 points}\})\ni\{(\{A,B,C\},l)\},$ parametrizing the intersection points between the two conics and the choices for the line $l.$ Once we have fixed $l,$ two points on it will uniquely determine the two conics. Hence, the fiber space will be $B(\mathbf{C},2)$ whose Borel-Moore homology will be considered with constant coefficient because, when we exchange the two conics we are actually exchanging 2 couples of points in the configuration space: the two points lying on the line, and the two points of intersection between the exceptional divisor and the strict transforms of the two conics. On the other hand, there is a natural action of $\mathfrak{S}_3$ on the base space, and by noticing that both factors have no $\mathfrak{S}_3\mbox{-}$anti-invariant classes in their homologies\footnotemark\footnotetext{For the factor $\tilde{B}(\mathbf{P}^2\backslash\{P\},3)$ this follows by Lemma 2.3. While for the second factor, this can be deduced by computing the Borel-Moore homology of $\mathbf{P}^1\backslash\{\text{3 points}\}$ in terms of $\mathfrak{S}_3\mbox{-}$ representations, that is $\mathbf{S}_3(1)$ in degree 2 and $\mathbf{S}_{2,1}$ in degree 1.} the total space will have trivial twisted Borel-Moore homology.
\subsection{Configurations (53)-(54)}
These configurations are obtained by blowing up respectively a point of intersection between two lines and a point of intersection between a line and a conic in the set of singular points in $\mathbf{P}^2$ defined by 3 points $A,B,C$ in general position + 6 points of intersection between the three lines $\overline{AB},\overline{BC},\overline{AC}$ and a conic not tangential to the lines, that is config. 39 in \cite{gorinov2005real}.\\
\begin{figure}[H]\centering
	\begin{tikzpicture}
	
	\filldraw[black] (6.39,1.45) circle (2pt) node[anchor=south west] {$P$};
	\draw[gray, thick] (-1,-1.5) -- (0.5,1.75);
	\draw[gray, thick] (1,-1.5) -- (-0.5,1.75);
	\draw[gray, thick] (-1.75,-0.5) -- (1.75,-0.5);
	\draw[gray, thick] (0,0) circle (1.5); 
	
	\filldraw[black] (6.54,-0.5) circle (2pt) node[anchor=south west] {$B$};
	\filldraw[black] (5.46,-0.5) circle (2pt) node[anchor=south east] {$C$};
	
	\filldraw[black] (0,0.7) circle (2pt) node[anchor=west] {$P=A$};
	\filldraw[black] (-0.54,-0.5) circle (2pt) node[anchor=south east] {$C$};
	\filldraw[black] (0.54,-0.5) circle (2pt) node[anchor=south west] {$B$};
	\filldraw[black] (6,0.7) circle (2pt) node[anchor=west] {$A$};
	\draw[gray, thick] (5,-1.5) -- (6.5,1.75);
	\draw[gray, thick] (7,-1.5) -- (5.5,1.75);
	\draw[gray, thick] (4.25,-0.5) -- (7.75,-0.5);
	\draw[gray, thick] (6,0) circle (1.5);
	\end{tikzpicture}
\end{figure}

As in \cite{gorinov2005real}, we want to fiber both the configuration spaces over the spaces parametrizing the points of intersection between the three lines.\\
When we choose $P$ as one of these points, e.g. $A,$ the total space will be fibered over $B(\mathbf{P}^2\backslash\{P\},2)$ instead of $B(\mathbf{P}^2,3).$\\
On the other hand, when $P$ is the intersection point between the conic and a line, the configuration space is fibered over a quotient of $B(\mathbf{P}^2\backslash\{P\},2)\times \mathbf{C}^*\ni(\{B,C\},A).$\\
The fiber space, denoted by $Y$ in \cite{gorinov2005real}, will be in both cases the same, i.e. a fiber bundle over $B(\mathbf{C}^*,2)\times B(\mathbf{C}^*,2)$, the configuration space of 2 points on each of $\overline{AB},\overline{BC}$, excluding $A,B,C.$ Therefore it will have the same Borel-Moore homology, that is $\mathbf{Q}$ in degree 5, $\mathbf{Q}(1)^2$ in degree 6 and $\mathbf{Q}(2)$ in degree 7, but we will have to consider the action of the fundamental group of the new base space that is either $B(\mathbf{P}^2\backslash\{P\},2)$, or it contains it as a factor of a product. The fundamental group will then be the restriction of the symmetric group $\mathfrak{S}_3$ to $\{B,C\}$: $\mathfrak{S}_2.$\\
Thus, we only need to consider local systems of coefficients corresponding to the restrictions of the representations of $\mathfrak{S}_3$: trivial and sign representation will restrict respectively to trivial and sign representation on $\mathfrak{S}_2,$ while the 2-dimensional irreducible representation restricts to the direct sum of the trivial and sign representation. We have that 
$\bar{P}(B(\mathbf{P}^2\backslash\{P\},2),\mathbf{Q})=\mathbf{L}^{-4}t^8$ and $\bar{P}(B(\mathbf{P}^2\backslash\{P\},2),\pm\mathbf{Q})=\mathbf{L}^{-3}t^6,$ so we will get a similar $E^2-$term of the spectral sequence, with the only difference that the action of $\mathfrak{S}_2$ on $\bar{H}_6(Y;\pm\mathbf{Q})=\mathbf{Q}(1)^2$ now must be reducible:

\begin{center}
	\begin{tabular}{ c|cccccc } 
		7&$\mathbf{Q}(5)$\tikzmark{B} & && & \\
		
		6& $\mathbf{Q}(4)$\tikzmark{D}& &\tikzmark{A}$\mathbf{Q}(5)$& & & \\
		
		5& & &\tikzmark{C}$\mathbf{Q}(4)$& & \\
		
		\hline
		&6&7&8&9&10\\
	\end{tabular}
	\begin{tikzpicture}[overlay, remember picture, yshift=.25\baselineskip, shorten >=.5pt, shorten <=.5pt]
	\draw [shorten >=.1cm,shorten <=.1cm,->]([yshift=5pt]{pic cs:A}) -- ({pic cs:B});
	\draw [shorten >=.1cm,shorten <=.1cm,->] ([yshift=5pt]{pic cs:C}) -- ({pic cs:D});
	\end{tikzpicture}
\end{center}
and the differentials $d^2_{8,5}:E^2_{8,5}\rightarrow E^2_{6,6},$ $d^2_{8,6}:E^2_{8,6}\rightarrow E^2_{6,7}$ must be isomorphisms since both $E^2_{5,8},E^2_{6,6}$ are generated by $\bar{H}_6(B(\mathbf{P}^2\backslash\{P\},2),\pm\mathbf{Q})$ and both $E^2_{6,8},E^2_{7,6}$ are generated by $\bar{H}_8(B(\mathbf{P}^2\backslash\{P\},2),\mathbf{Q}).$\\
Therefore, also these configuration spaces have trivial twisted Borel-Moore homology.

\subsection{Configuration (56)-(57)} As configurations of type (50),(51) and (52), here $P$ is also a triple point, but we have one additional singular point. In these two cases, we obtain the configurations of singularities by blowing up the following curves at $P.$\\

\begin{figure}[H]\centering
	\begin{tikzpicture}[scale = 0.7]
	\filldraw[black] (0,0) circle (2pt) node[anchor=east] {$P$};
	\draw[gray, thick] (-1,-0.5) -- (2.5,1.25);
	\draw[gray, thick] (-1,0.5) -- (2.5,-1.25);
	\draw[gray, thick] (1,2.25) -- (1,-2.25);
	\draw[gray, thick] (1,0) ellipse (1 and 2);
	
	\filldraw[black] (9,0) circle (2pt) node[anchor=north] {$P$};
	\draw[gray, thick] (8.5,-0.25) -- (12,1.5);
	\draw[gray, thick] (8.5,0.25) -- (12,-1.5);
	\draw[gray, thick] (8.25,0) -- (12.25,0);
	\draw[gray, thick] (10.5,0) ellipse (1 and 2);
	
	\end{tikzpicture}
\end{figure}

Consider first configurations of type (56). Similarly to configurations (53), (54), we fiber the space over the points of intersection between the lines, that is $\tilde{B}(\mathbf{P}^2\backslash\{P\},2).$
These two points, together with $P,$ uniquely determine the three lines, and the 4 points left defining the configuration, together with $P$, will uniquely determine the conic. The fiber space is then isomorphic to the quotient of  $\mathbf{C}^*\times\mathbf{C}^*\times B(\mathbf{C}^*,2)$ by the involution exchanging the first two factors, thus it has twisted Borel-Moore homology equal to $\mathbf{Q}$ in degree $4$ and $\mathbf{Q}(1)$ in degree 5.
The fundamental group of the base space, acts by exchanging the two points, and thus induces an $\mathfrak{S}_2\mbox{-}$action on the line not passing through $P$ that is the one described in Lemma 2.2. Therefore, the Borel-Moore homology class in degree $5$ must be anti-invariant under such action, while the class in degree $4$ will be invariant and by applying Lemmas 2.3, 2.4 and 2.6 we have that the second page of the spectral sequence must have the following form:
\begin{center}
	\begin{tabular}{ c|ccc } 
		5&$\mathbf{Q}(4)$\tikzmark{end}& &\\
		4&& &\tikzmark{start}$\mathbf{Q}(4)$\\
		\hline
		&6&7&8\\
	\end{tabular}
	\begin{tikzpicture}[overlay, remember picture, yshift=.25\baselineskip, shorten >=.5pt, shorten <=.5pt]
	\draw [shorten >=.1cm,shorten <=.1cm,->]([yshift=5pt]{pic cs:start}) -- ({pic cs:end});
	\end{tikzpicture}
\end{center}
where the differential will be an isomorphism.\\
On the other hand, we can fiber the second configuration space over the space parametrizing the three lines through $P.$ It suffices to fix 5 points on those lines to determine the conic, therefore the fiber space will be a quotient of $B(\mathbf{C},2)\times B(\mathbf{C},2)\times \mathbf{C},$ so this configuration space will also give no contribution to the Borel-Moore homology of the discriminant.

\subsection{Configurations (58)-(59)}
These configuration spaces are both defined by 5 lines in the projective plane. The first is obtained by blowing up any of the singular points of 5 lines in general position, while the second one is obtained by blowing up the point of intersection of three concurrent lines, in a plane quintic define by those lines and two additional lines meeting at a point outside the three concurrent lines. 

\begin{figure}[h!]\centering
	\begin{tikzpicture}[scale = 0.7]
	\filldraw[black] (-1,-0.5) circle (2pt) node[anchor=north] {$P$};
	\draw[gray, thick] (-2,-1) -- (2,1);
	\draw[gray, thick] (-0.25,-2) -- (-0.25,2);
	\draw[gray, thick] (-2.5,-0.5) -- (2.5,-0.5);
	\draw[gray, thick] (-1,2) -- (1.5,-1.5);
	\draw[gray, thick] (-1.5,0.75) -- (2.25,-0.75);

	\filldraw[black] (9,0) circle (2pt) node[anchor=west] {$P$};
	\draw[gray, thick] (9,2) -- (9,-2);
	\draw[gray, thick] (8,2) -- (10,-2);
	\draw[gray, thick] (8,-2) -- (10,2);
	\draw[gray, thick] (6.5,0.5) -- (11,-1.75);
	\draw[gray, thick] (6.5,-0.5) -- (11,1.75);

	\end{tikzpicture}
\end{figure}

We can fiber both configuration spaces over the set of lines meeting at $P,$ thus:
$$X_{(58)}\rightarrow B(\mathbf{P}^1,2)\qquad\text{and}\qquad X_{(59)}\rightarrow B(\mathbf{P}^1,3),$$
the fiber space will then be the space of the remaining lines defining the configuration that are, respectively, $B(\mathbf{C}^2,3)$ and $B(\mathbf{C}^2,2)$ by duality, and both have trivial twisted Borel-Moore homology by Lemma 2.1.
\clearpage

\bibliographystyle{alpha}
\def\cprime{$'$}

\end{document}